\newcommand{\set}[1]{ \{\,#1\, \}}
\newcommand{\s}[1]{\text{\footnotesize$#1$}}
\newcommand{\clnl}{\multicolumn{1}{c}{}}
\newcommand{\offssymb}{\alpha}
\newcommand{\varoffs}[2]{{\offssymb_#1({#2})}}
\def\mod2pend{{\sc Mod2Pend}}
\newcommand{\setbg}[1]{\bigl\{\, #1\, \bigr\}}
\newcommand{\rng}[3]{#1 = {#2}\,{:}\,{#3}}
\newcommand\Tstr{\rule{0pt}{2.6ex}}       
\newcommand\Bstr{\rule[-1.2ex]{0pt}{0pt}} 
\newcommand{\slt}[1]{{\sl #1}\xspace}
\newcommand{\eqset}{I}
\newcommand{\varset}{J}
\newcommand{\TX} { \mathbf{x} }
\newcommand{\TF} { \mathbf{f} }
\newcommand{\ikb}[2]{\eqset_{#1}^{#2}}
\newcommand{\jkb}[2]{\varset_{#1}^{#2}}
\newcommand{\jk}[1]{\varset_{#1}}
\newcommand{\xjk}[1]{\TX_{\varset_{#1}}}
\newcommand{\xjkb}[2]{\TX_{\jkb{#1}{#2}}}
\newcommand{\xjkbig}[2]{\widetilde\TX_{\jkb{#1}{#2}}}
\newcommand{\fikb}[2]{\TF_{\ikb{#1}{#2}}}
\newcommand{\Fikb}[2]{\mathbf{F}_{\ikb{#1}{#2}}}
\newcommand{\eqntxt}[1]{\quad \text{#1} \quad}
\newcommand{\cf}{cf.\@\xspace}
\newcommand{\daesa}{{\sc daesa}\xspace}
\newcommand{\matlab}{{\sc Matlab}\xspace}
\newcommand{\daets}{{\sc daets}\xspace}
\newcommand{\JS}{\mathbf{J}} 
\newcommand{\defterm}[1]{[\S#1]}
\newcommand{\bcode}[2]{\sigma_#2(#1)}
\newcommand{\typeI}{{\tt I}\xspace}
\newcommand{\typeL}{{\tt L}\xspace}
\newcommand{\typeN}{{\tt N}\xspace}
\newcommand{\ttype}{{\tt Q}}
\newcommand{\vartype}{{\tt Q}}
\newcommand{\Jmat}[1]{A}
\newcommand{\bvec}[1]{b}
\newcommand{\yiset}{Y_i}
\newcommand{\ziset}{Z_i}
\newcommand{\kset}[1]{M_{#1}}
\newcommand{\Mset}[1]{M_}
\newcommand{\yset}[1]{Y_{#1}}
\newcommand{\assa}{\leftarrow}
\newcommand{\uv}{u}
\newcounter{vvc}
\newcommand{\vv}{{\rv_{\thevvc}}\refstepcounter{vvc}
}
\newcommand{\tagL}{0 }
\newcommand{\tagNL}{-1}
\newcommand{\tagIM}[1]{\inftag}
\newcommand{\tagMissing}[1]{\infty}
\newcommand{\inftag}{\infty}
\newcommand{\ivsindx}{{ {J_\text{v }}}}
\newcommand{\igsindx}{J_\text{g }}
\newcommand{\qlsym}{\gamma}
\newcommand{\qlvec}[1]{\qlsym_{#1}}
\newcommand{\wv}{v}
\newcommand{\rv}{v}
\newcommand{\nlbox}[1]{\colorbox{lightgray}{#1}}
\newcommand{\Input}{\text{}\\[-8pt]{\sc Input}}
\newcommand{\Output}{\text{}\\[-8pt]{\sc Output}}
\newcommand{\Compute}{\text{}\\[-8pt]{\sc Compute}}
\newcommand{\BAmcc}[3]{\BAmulticolumn{#1}{#2}{#3}}
\def\rf#1{(\@rf#1,.)}
\def\@rf#1,{\ref{eq:#1}\@ifnextchar . {\@endrf}{, \@rf}}
\def\@endrf.{}
\newcommand{\rfr}[2]{(\ref{#1}--\ref{#2})}
\newcommand{\hk}{{k}_l}
\newcommand{\hc}{\widehat{c}}
\newcommand{\hd}{\widehat{d}}
\newcommand{\Setbg}[1]{ \bigl\{ #1   \bigr\}}
\newcommand{\blkl}[2]{#1\in B_{#2}}
\newcommand{\sij}[2]{\sigma_{#1#2}}
\newcommand{\sijc}[2]{\sigma_{#1,#2}}
\newcommand{\posinf}{\infty}
\newcommand{\neginf}{-\infty}
\newcommand{\lam}{\lambda}
\newcommand{\Kl}{K_l}
\newcommand{\sqr}[1]{#1^2}
\newcommand{\Diff}[2]{#1''}
\newcommand{\xvar}{\rv_{-5}}
\newcommand{\yvar}{\rv_{-4}}
\newcommand{\lamvar}{\rv_{-3}}
\newcommand{\uvar}{\rv_{-2}}
\newcommand{\wvar}{\rv_{-1}}
\newcommand{\muvar}{\rv_{0}}
\newenvironment{algo}[1]%
{
\begin{tabbing}{{\bf Algorithm.} \sc{#1}}\\%
\hspace*{2em}\=\hspace*{2em}\=\hspace*{2em}\=\hspace*{2em}\=\hspace*{2em}\=\hspace*{2em}\=\hspace*{2em}\=\hspace*{2em}\=\kill\!\!}
{\end{tabbing}}
\newcommand{\FOR}{{\bf for}\xspace}
\newcommand{\IF}{{\bf if}\xspace}
\newcommand{\ELSE}{{\bf else}\xspace}
\newcommand{\ELSEIF}{{\bf elseif}\xspace}
\newcommand{\THEN}{{\bf then}\xspace}
\newcommand{\CONTINUE}{{\bf continue}\xspace}
\newcommand{\TRUE}{{\bf true }\xspace}
\newcommand{\FALSE}{{\bf false }\xspace}
\newtheorem{example}{{\em Example}}[section]
\newtheorem{remark}{{\em Remark}}[section]
\definecolor{green1}{rgb}{0.06, 0.89, 0.94}
\definecolor{darkgray}{gray}{0.65}
\definecolor{pale-gray}{gray}{0.85}
\definecolor{LightCyan}{rgb}{0.88,1,1}
\definecolor{dgreen}{RGB}{1,120,1}
\definecolor{purple}{RGB}{230,0,95}
\definecolor{lgray}{rgb}{0.6,0.6,0.6}
\title{Exploiting Fine Block Triangularization and Quasilinearity in Differential-Algebraic Equation Systems}
\author{NEDIALKO S. NEDIALKOV\footnotemark[2]\ \footnotemark[4]
\and GUANGNING TAN\footnotemark[2]\ \footnotemark[5]
\and John D. Pryce\footnotemark[3]\footnotemark[6]
}
\begin{document}
\maketitle

\renewcommand{\thefootnote}{\fnsymbol{footnote}}

\footnotetext[2]{Department of Computing and Software, McMaster University, Hamilton, Canada}

\footnotetext[3]{Cardiff School of Mathematics, Cardiff University, UK}

\footnotetext[4]{Supported in part by the 
        Natural Sciences and Engineering Research Council of Canada (NSERC)}

\footnotetext[5]{Supported  in part by the  Ontario Research Fund (ORF), Canada}        

\footnotetext[6]{Supported  in part by The Leverhulme Trust}

\renewcommand{\thefootnote}{\arabic{footnote}}

\begin{abstract}
The $\Sigma$-method for structural analysis of a differential-algebraic equation  (DAE) system  produces offset vectors from which the sparsity pattern of DAE's system Jacobian is derived; this  pattern implies a fine block-triangular form (BTF).
This article derives a simple method for quasilinearity analysis of a DAE and  combines it with its fine BTF to construct a method for finding the minimal set of initial values needed for consistent initialization and a method 
for a block-wise computation of derivatives for the solution to the DAE.  
\end{abstract}

\begin{keywords} 
differential-algebraic equations, structural analysis, 
quasilinearity
\end{keywords}

\begin{AMS}
34A09, 
65L80, 
41A58, 
65F50 
\end{AMS}

\pagestyle{myheadings}
\thispagestyle{plain}
\markboth{N. S. NEDIALKOV, G. TAN, AND J.D. PRYCE}{Exploiting Fine Block Triangularization and Quasilinearity in DAE Systems}

\section{Introduction}\label{sc:intro}

The authors have developed the \matlab package \daesa, Differential-Algebraic Equations Structural Analyzer \cite{NedialkovPryce2012b}, aimed at analyzing the structure of a system of differential-algebraic equations (DAEs) of the general form
\begin{align}\label{eq:maineq}
  f_i(\, t,\, \text{the $x_j$ and derivatives of them}\,) = 0, \quad i=1,
\ldots,n,
\end{align}
where the $x_j(t),\ j=1,\ldots,n$, are state variables, and $t$ is the time variable.
The $f_i$ can be arbitrary
expressions built from the $x_j$ and $t$ using $+,-,\times, \div$, other analytic standard functions, and the
$d^p/dt^p$ operator. 

  \daesa  implements the  {$\Sigma$-method} 
for structural analysis \cite{Pryce2001a}.
Using operator overloading, this package extracts the  {signature matrix} of \rf{maineq}, and then by solving 
a linear assignment problem,   finds two  {offset vectors}, 
from which it constructs  {coarse} and  {fine block-triangular forms (BTFs)} of the DAE.  Using the fine BTF, \daesa performs  {\em quasilinearity} (QL)
analysis and then finds the {\em minimal set} of variables and derivatives of them that require initial values, and also constructs a {\em block-wise solution scheme}.

Some of the theory of  these BTFs is presented in 
\cite{NedialkovPryce2012a}, where several results were left to be proved as future work. The companion article \cite{Pryce2014a} proves them and presents new results on BTFs, and in particular related to  the fine BTF. 
Describing the method for QL analysis was also left for  future work in \cite{NedialkovPryce2012a}: we derive this method here. We also present  \daesa's 
 algorithm for finding the minimal set of variables and derivatives  that need to be initialized and the algorithm for producing a block-wise solution scheme.

Section~\ref{sc:solscheme}
illustrates how the computation of derivatives for the solution to \rf{maineq} was prescribed originally by the  $\Sigma$-method.   Section~\ref{sc:btf} derives a  method for computing them based on   a  fine BTF of the DAE.
A simple method for QL analysis is derived in Section~\ref{sc:qla}. 
The overall solution scheme for computing derivatives for the solution to \rf{maineq}, building on its fine BTF and QL information, is given in Section~\ref{sc:overall}.
Conclusions are in Section~\ref{sc:concl}.

For brevity, we  refer to the  companion article \cite{Pryce2014a} for definitions and concepts. A term that is explained in \cite{Pryce2014a} is typeset here in slanted font on first occurrence, and the   subsection where it appears in \cite{Pryce2014a} is  referenced   
as \defterm{X}.

\smallskip

We assume that \rf{maineq} is \slt{structurally well posed}; that is, its \slt{signature matrix} $\Sigma = (\sij{i}{j}$) contains a \slt{highest-value transversal (HVT)} with entries $>-\infty$ \defterm{2.1}.

\section{Basic solution scheme}\label{sc:solscheme}
Let $c$ and $d$ be \slt{valid offset  vectors} \defterm{2.1} for \rf{maineq}, and let
 $k_d = -\max_j d_j$. We can find derivatives for the solution to 
 \rf{maineq} in stages  $k=k_d, k_d+1, \ldots$, where at stage $k$ we \begin{align}
\eqntxt{solve} &\setbg{f_i^{(k+c_i)} = 0\mid k+c_i\ge 0}
\label{eq:eqns}\\
\eqntxt{for} &\setbg{x_j^{(k+d_j)} \mid k+d_j\ge 0} 
\label{eq:vars}
\end{align}
{using} values for $\setbg{x_j^{(r)}\mid 0 \le r < k+d_j}$,
  which are found at stages $<k$ \cite{Pryce2001a}.
By a ``derivative'' $x_j^{(r)}$ we shall mean $x_j$ and (appropriate) derivatives of it.

We say the DAE \rf{maineq} is quasilinear (QL), if it is linear in the highest-order derivatives occurring in it, and non-quasilinear (NQL) otherwise (see also \S\ref{sc:qla}). 
To start this stage-wise pro\-cess, we need to initialize
\begin{align}\label{eq:ivnql}
\setbg{x_j^{(r)}\mid 0\le r \le d_j-\qlsym },
\eqntxt{where $\qlsym=1$ if the DAE is QL and 0 otherwise.}
\end{align}
We refer to \rf{eqns,vars} as  {basic (solution) scheme}. It succeeds (locally), if the \slt{System Jacobian} $\~J$, defined as
 $\~J_{ij} = \partial f_i/\partial x^{(\sij{i}{j})}$, if $\sij{i}{j} = d_j-c_i$ and $0$ otherwise,   is non-singular at a {consistent point}  
 \defterm{2.1}, see also \cite{Pryce2001a}. The systems at stages $k<0$ are generally underdetermined. For stages $k\ge 0$ they are square, and for $k > 0$ always linear, where the matrix of the linear system is  $\~J$. If the DAE is QL, the system at $k=0$ is also linear with a matrix $\~J$.

\smallskip 
 In practice, when solving \rf{maineq} numerically by  Taylor series, we  
compute 
 Taylor coefficients (TCs) $x_j^{(k+d_j)}/(k+d_j)!$ directly, where instead of derivatives in \rf{eqns,vars} we have TCs. Such a computation is implemented in the \daets solver;  see  \cite{nedialkov2005solving,nedialkov2007solving,nedialkov2008solving} for details. In the present work,  for simplicity of the exposition, we express the theory in terms of derivatives.

\begin{example}\label{ex:solscheme}\rm
Throughout this article, we use as an example 
the following DAE of differentiation  {index}  7:
\begin{align}
\begin{split}
0 = A &= x'' + x\lambda      \\
0 = B &= y'' + y\lambda + (x')^3 -G \\
0 = C &= x^{2} + y^{2} - L^{2} \\[1ex] 
\end{split}
\begin{split}
0 = D & = u'' + u\mu     \\
0 = E &  = (\wv''')^{2}  + \wv\mu   -G   \\
0 = F &= u^{2} + \wv^{2} - (L+c\lambda)^{2}+\lambda''.
\end{split}
\label{eq:mod2p}
\end{align}
The state variables are $x$, $y$, $\lam$, $u$, $\wv$, and $\mu$;  $L$ (length), $G$ (gravity), and $c>0$ are constants. These equations are obtained from a two-pendula problem \cite{NedialkovPryce2012b} in which 
\begin{align*}
\begin{split}
 B &= y'' + y\lambda -G, \quad
  E  = \wv''  + \wv\mu  - G,\eqntxt{and}
 F = u^{2} + \wv^{2} - (L+c\lambda)^{2}.
\end{split}
\end{align*}
%

The {$\Sigma$ matrix} of \rf{mod2p} and its $\~J$ are  shown in Figure~\ref{fig:sj}.
\begin{figure}[ht]
\begin{align*}
\Sigma &=
\begin{blockarray}{r@{\hskip 6pt}rc@{\hskip 6pt}c@{\hskip 6pt}c@{\hskip 6pt}c@{\hskip 6pt}c@{\hskip 6pt}c@{\hskip 6pt}c@{\hskip 6pt}c}
       & &  x_1 &  x_2 &  x_3 &  x_4 &  x_5 &  x_6 &  
       \\[1ex]
       & &  x &   y &  \lam &  u &  \wv &  \mu & \s{c_i} \\
\begin{block}{r@{\hskip 6pt}r[@{\hskip 3pt}c@{\hskip 6pt}c@{\hskip 6pt}c@{\hskip 6pt}c@{\hskip 6pt}c@{\hskip 6pt}c@{\hskip -3pt}]@{\hskip 6pt}c@{\hskip 6pt}c} \\[-2ex]
f_1&  A\;\;\;\; & 2^\bullet  &  & 0  &  &   &    &   \;\;\s4\;\;\\
f_2&  B\;\;\;\; & 1  &2   & 0^\bullet   &  &  &    &   \;\;\s4\;\; \\ 
f_3&  C\;\;\;\; & 0 &0^\bullet  &    &  &  &    &   \;\;\s6\;\;\\
f_4&  D\;\;\;\; &    &   &    &2  &  &0^\bullet   &   \;\;\s0\;\;\\
f_5&  E\;\;\;\; &    &   &      &  &3^\bullet    & 0 & \;\;\s0\;\;\\
f_6&  F\;\;\;\; &    &   &   2 &0^\bullet  &0 &    &  \;\;\s2\;\;  \\[1ex]
\end{block}
 &\s{d_j}& \s6 &\s6&\s4  &\s2&\s3&\s0
  \end{blockarray},
 &
 \~J &= \renewcommand{\arraystretch}{1.2}
\begin{blockarray}{c@{\hskip 6pt}c@{\hskip 6pt}c@{\hskip 6pt}c@{\hskip 6pt}c@{\hskip 6pt}c@{\hskip 6pt}c}
  &  x &y &\lam & u & v& \mu    \\
\begin{block}{c[@{\hskip 3pt}c@{\hskip 6pt}c@{\hskip 9pt}c@{\hskip 9pt}c@{\hskip 9pt}c@{\hskip 6pt}c@{\hspace{3pt}}]} \\[-2ex]
A\;\;\;\; &1 &   & x &  &   &  
\\
B\;\;\;\; &  & 1 & y &    &   &  
\\
C\;\;\;\; &2x & 2y &  &   &   &  
\\
D\;\;\;\; &  &   &  &  1 &   &  u
\\ 
E\;\;\;\; &  &   &  &   & 2\wv'''  &  \wv
\\
F\;\;\;\; &  &   &  1 &  2u &   &  
\\
\end{block} 
\end{blockarray}\\[-1ex]
&\hspace{63pt}\text{blank denotes $\neginf$}
&&\hspace{55pt}\text{blank denotes 0}
\end{align*}
\vspace{-20pt}
\caption{\label{fig:sj}Signature matrix and system Jacobian of  \protect\rf{mod2p} with labeling of equations, variables and offsets. A HVT in $\Sigma$ is marked with $\bullet$.
$\JS_{2,1}=0$ and $\JS_{6,5} =0$, since $2 = d_1-c_2> \sigma_{2,1}=1$ and $1 = d_5-c_6> \sigma_{6,5}=0$, respectively.}
\end{figure}
In Table~\ref{tbl:solscheme}, we illustrate the basic scheme  \rf{eqns,vars}   when applied to  \rf{mod2p}.
This problem   is NQL (because of $(v''')^2$), so $\qlsym=0$, and \rf{ivnql} implies we need to give initial values for $x^{(\le 6)}$, $y^{(\le 6)}$, $\lam^{(\le 4)}$, $u^{(\le 2)}$, $\wv^{(\le 3)}$, and $\mu$;
the notation $z^{(\le r)}$ is short  for $z, z', \ldots, z^{(r)}$. We show in \S\ref{ss:init} how the number of initial values can be drastically reduced by exploiting a 
{fine BTF} of \rf{mod2p}.

\begin{table}[ht]\footnotesize
\caption{Basic scheme for computing derivatives of the solution to (\ref{eq:mod2p}). Nonlinear equations and the variables that appear nonlinearly in them are marked by
$\nlbox{\protect\phantom{u}}$\,.}
\vspace{-10pt}
\label{tbl:solscheme}
\begin{equation*}
\renewcommand{\arraystretch}{1.1}
\begin{array}
{lc | l |l|l|l|l|l|l|cl }
\multicolumn{11}{c}{\text{stage $k$}}\\
\cline{3-11}
& c_i,\, d_j& \multicolumn{1}{c}{-6} & \multicolumn{1}{c}{-5} &   \multicolumn{1}{c}{-4} & \multicolumn{1}{c}{-3} &\multicolumn{1}{c}{-2} & \multicolumn{1}{c}{-1} &\multicolumn{1}{c}{0}&\cdots  &  \multicolumn{1}{c}{>0}
\\ \cline{2-11}
 \Tstr
 \multirow{6}{*}{solve}
		  &4& &  &A  & A'  &\text{$A''$}&A'''& A^{(4)}       &\cdots & A^{(k+4)}  \\ 
	&4 &   &  & B     & B'       & B''		&B'''& B^{(4)}       &\cdots &B^{(k+4)}   \\  
	&6	&  \nlbox{$C$}  & C'  & C''     & C'''      & C^{(4)} &C^{(5)} &C^{(6)}  &  \cdots &    C^{(k+6)}   
	 \\ 
&0	&    &    &&&& &D  & \cdots &D^{(k)}   \\ 
&0	&    &    &&&& &\nlbox{$E$}  & \cdots &E^{(k)}   \\ 
&2&    &    & &&\nlbox{$F$}     & F '      &F''  &\cdots &F^{(k+2)}    \Bstr 
\\ \cline{2-11}
\Tstr
\multirow{6}{*}{for}&
6&    \nlbox{$x$}   & x'  & x''     & x'''       &x^{(4)}  &x^{(5)}& x^{(6)}       &\cdots &x^{(k+6)}   
\\ 
&6  &   \nlbox{$y$}   & y'  & y''     & y'''       &y^{(4)}  &y^{(5)}& y^{(6)}       &\cdots &y^{(k+6)}    
  \\  
&4 &         & &\lam  & \lam'  &\lam''&\lam'''& \lam^{(4)}       &\cdots &\lam^{(k+4)}   
 \\
&2&    &     && &\nlbox{$u$} & u'  &u''&\cdots &u^{(k+2)}       \\
&3&         && &\wv & \wv'  &\wv''&\nlbox{$\wv'''$}& \cdots &\wv^{(k+3)}    \\
&0&         &&&&& &\mu & \cdots &\mu^{(k)}   \Bstr       
\\
 \cline{1-11}
\end{array} 
\end{equation*}
\vspace{-5pt}
\end{table}

\end{example}

\section{Solution scheme through fine BTF}
\label{sc:btf}

The \slt{coarse BTF}     of  \rf{maineq} is based on the sparsity pattern
of the DAE:
$
S = \setbg{(i,j)\mid \sij{i}{j}>\neginf}
$.
For given  valid offset vectors  $c$ and $d$, 
the corresponding  \slt{fine BTF} \defterm{4.1} is based on the sparsity pattern of~
$\~J$:
\begin{align}
{S_0=S_0(c,d)} = \setbg{(i,j)\mid d_j-c_i=\sij{i}{j}}.\label{eq:S0}
\end{align}
We refer to $c$ and $d$ as \slt{global offsets}.
 By $\hc$ and $\hd$ we denote the vectors of \slt{local offsets}. 
We assume that both global and local offsets are valid  but not necessarily \slt{canonical}  \defterm{5.1}, except in \S\ref{ss:init}, where the local offsets must  
 canonical. 
 
 \begin{example}\rm
For \rf{mod2p} there are two coarse (diagonal) blocks, see  Figure~\ref{fig:sjb}, consisting of equations $E,D,F=0$ in variables  $\wv, \mu, u$, 
and equations $A,B,C=0$ in variables $x,y,\lam$, respectively. 
The former can be decomposed into three fine blocks, while the latter cannot  be  decomposed into smaller fine blocks, as the block form of its sparsity pattern, $S_0$, is irreducible.

\begin{figure}[ht]
\newcommand{\dbar}{\BAmcc{1}{|c}{}}
\newcommand{\lbar}[1]{\BAmcc{1}{|c}{#1}}

\begin{align*}\renewcommand{\arraystretch}{1.2}
\Sigma=\begin{blockarray}{r@{\hskip 5pt}rc@{\hskip 5pt}c@{\hskip 5pt}c@{\hskip 5pt}c@{\hskip 5pt}c@{\hskip 5pt}c@{\hskip 5pt}c@{\hskip 5pt}c}
&&  x_1 &   x_2 &  x_3 &  x_4 &  x_5 &  x_6\\[1ex]
       &&  \wv &   \mu &  u &  x &  y &  \lam & \s{c_i} & \s{\hc_i}\\
\begin{block}{r@{\hskip 5pt}r[@{\hskip 3pt}c@{\hskip 5pt}c@{\hskip 5pt}c@{\hskip 5pt}c@{\hskip 5pt}c@{\hskip 5pt}c@{\hskip -3pt}]@{\hskip 6pt}c@{\hskip 5pt}c} \\[-2ex]
f_1 &E\;\;\;\; & 3  & \lbar{0}  &  & \dbar &   &    &   \;\;\s0\;\; &\s0 \\\cline{3-4}
f_2 &D\;\;\;\; &    & \lbar{0}   & \lbar{2} &  \dbar &  &    &   \;\;\s0\;\; &\s0\\ \cline{4-5}
f_3 &F\;\;\;\; & 0 &  &\BAmcc{1}{|c}0    & \dbar &  &   2 & \;\;\s2\;\; &\s0\\ \cline{3-8}
f_4  &A\;\;\;\; &    &   &    &\BAmcc{1}{|c}2 & &0    & \;\;\s4\;\;  &\s0\\
f_5  &B\;\;\;\; &    &   &      & \BAmcc{1}{|c}1 &\; 2   & 0 & \;\;\s4\;\; &\s0\\
f_6  &C\;\;\;\; &    &   &    &\BAmcc{1}{|c}0 & \; 0 &   &   \;\;\s6\;\; &\s2\\
[2pt]
\end{block}
&\s{d_j}& \s3 &\s0&\s2 &\s6&\s6&\s4\\
&\s{\hd_j}& \s3 &\s0&\s0 &\s2&\s2&\s0
\end{blockarray},
\quad
\renewcommand{\arraystretch}{1.4}
\~J=
\begin{blockarray}{c@{\hskip 5pt}c@{\hskip 5pt}c@{\hskip 5pt}c@{\hskip 5pt}c@{\hskip 5pt}c@{\hskip 5pt}c}
 &  \wv &   \mu &  u &  x &  y &  \;\lam \\
\begin{block}{c[@{\hskip 3pt}c@{\hskip 5pt}c@{\hskip 5pt}cc@{\hskip 5pt}c@{\hskip 5pt}c@{\hspace{3pt}}]} \\[-2ex]
E\;\;\;\; & 2v''' & \lbar{\wv}   &  & \dbar &   & \\\cline{2-3}
D\;\;\;\; &  & \lbar{u} & \lbar{1} & \dbar &   &
\\   \cline{3-4}
F\;\;\;\; &  &   &\lbar{2u}   & \dbar  & &  1
\\ \cline{2-7}
A\;\;\;\; &&&& \lbar{1} &   & x
\\
B\;\;\;\; &&&& \dbar  & 1 & y
\\
C\;\;\;\; &&&& \lbar{2x} & 2y & 
\\
\end{block}
\end{blockarray}
\end{align*}
\vspace{-20pt}
\caption{\label{fig:sjb}Permuted $\Sigma$ and $\~J$ of \protect\rf{mod2p} into fine BTF;
$c_i$ and $d_j$ are  {global canonical offsets},
and  $\hc_i$ and $\hd_j$ are {local canonical offsets.}}
\end{figure}
\end{example}

\renewcommand{\^}[1]{^{(#1)}}

In the solution scheme below, we exploit the fine BTF 
(corresponding to the given global offsets):
we assume that the equations and variables of 
\rf{maineq} are permuted such that the resulting $\Sigma$ and $\~J$ are in fine BTF. For every position $(i,j)$ below a diagonal block, $d_j-c_i>\sij{i}{j}$, and hence $\~J_{i,j}$ is identically zero.

 For simplicity in the notation, we denote the permuted equations and variables as before, 
$f_1,f_2,\ldots, f_n$ and $x_1,x_2,\ldots, x_n$. (In the companion paper \cite{Pryce2014a}   they are denoted by $\widetilde f_i$ and $\widetilde x_i$, $\rng{i}{1}{n}$.)

\subsection{Solution scheme by blocks}\label{ss:ssblocks}

Assume that there are $p$ fine diagonal blocks, each of size $N_l$. Denote by $B_l$ the set of indices of rows [resp. columns] in block $l$. That is, 
\[
\textstyle{B_l  = \left\{ i  \mid   \sum_{r=1}^{l-1}N_r < i \le 
\sum_{r=1}^{l}N_r\right\} .}
\]
For example in Figure~\ref{fig:sjb}, $\blkl{2}{2}$
and  $\blkl{5}{4}$. Denote also
$$
B_{l:q} = \bigcup_{r=l}^{q} B_r .
$$

We 
re-organize the basic scheme \rfr{eq:eqns}{eq:vars} as follows.
At stage $k$,  
we solve blocks $l = p, p-1, \ldots, 1$ in order such that for block $l$ we 
\begin{align}
\eqntxt{solve} &\ \Setbg{f_i^{(k+c_i)} = 0\mid \blkl{i}{l} \text{ and }k+c_i\ge 0} 
\label{eq:eqnsbl}
\\
\eqntxt{for} &\ \Setbg{x_j^{(k+d_j)} \mid 
\blkl{j}{l} \text{ and } k+d_j\ge 0}
\label{eq:varsbl}
\\
\text{using}&&& \nonumber\\
\renewcommand{\arraystretch}{1.4}
&\begin{array}{lr}\Setbg{x_j^{(r)}\mid 0 \le r < k+d_j} & \qquad\text{(computed at stages $<k$) and}\\[1ex]
\multicolumn2l{\Setbg{x_j^{(k+d_j)} \mid 
\blkl{j}{l+1:p} \text{ and } k+d_j\ge 0} }\\
\multicolumn2r{\text{(available from blocks $>l$ at this stage).}}
\end{array}\label{eq:usingbl}
\end{align}
We refer to (\ref{eq:eqnsbl}--\ref{eq:usingbl}) as {block (solution) scheme}.

\begin{example}\rm
Using the fine BTF from Figure~\ref{fig:sjb}, we illustrate in Table~\ref{tbl:solschbtf} the scheme  (\ref{eq:eqnsbl}--\ref{eq:usingbl}) when applied to \rf{mod2p}.
\begin{table}[ht]
\footnotesize
\caption{Block scheme for computing derivatives for the solution to \protect\rf{mod2p}.\label{tbl:solschbtf}}
\vspace{-10pt}
\centering
\begin{equation*}
\renewcommand{\arraystretch}{1.4}
\begin{array}{cl|c|c|c|c|c|c|c|c|cc}
 \multicolumn{1}{c}{} & \multicolumn{1}{c}{}& \multicolumn{1}{c}{}&\multicolumn{7}{c}{\text{stage $k$}}\\ 
 \cline{4-12}
\multirow{8}{*}{block 4} 
&\multicolumn{1}{c}{}& \multicolumn{1}{c|}{c_i,d_j} &\multicolumn{1}{c}{-6} & \multicolumn{1}{c}{-5} &   \multicolumn{1}{c}{-4} &\multicolumn{1}{c}{-3} & \multicolumn{1}{c}{-2} & \multicolumn{1}{c}{-1} &
\multicolumn{1}{c}{0}&
\cdots &
\multicolumn{1}{c}{>0}\\ \cline{1-12}
\Tstr
	&&  4 &&     &A  & A'  &A''&A'''& A^{(4)}       &\cdots &A^{(k+4)}  
	\\
	& \text{solve}&  4 & & & B     & B'       &B''  		&B'''& B^{(4)}       &\cdots &B^{(k+4)}  \\  
	&&6& \nlbox{$C$}  & C'  & C''     & C'''      &C^{(4)}  &C^{(5)}& C^{(6)}       &\cdots &C^{(k+6)} 
	\Bstr   \\ 
	\cline{3-12}\Tstr
&   & 6& \nlbox{$x$}  & x'  & x''     & x'''       &x^{(4)}  &x^{(5)}& x^{(6)}       &\cdots &x^{(k+6)}    
\\ 
  &\text{for}&  6&  \nlbox{$y$}   & y'  & y''     & y'''       &y^{(4)}  &y^{(5)}& y^{(6)}       &\cdots &y^{(k+6)}   \\  
&    &     4& &&\lam  & \lam'  &\lam''&\lam'''& \lam^{(4)}       &\cdots &\lam^{(k+4)}  \\
 \hline\hline
 \multirow{4}{*}{block 3}
   &    &     &\clnl& \clnl  &\clnl&   &\downarrow&    \downarrow& \downarrow       &\cdots &\downarrow      \\
&\text{solve}&2&\clnl& \clnl   	   &	\clnl       &                  &  \nlbox{$F$}	  & {F'}& F''      &  \cdots &F^{(k)}   \\ 
\cline{8-12}\Tstr
&\text{for}&  2&\clnl &\clnl& \clnl	   	       &                  &  \nlbox{$u$} 	  & {u'}& u''       &\cdots & u^{(k)}  \\ 
 \hline\hline
 \multirow{4}{*}{block 2}
 & & &\clnl &\clnl&\clnl& \clnl& \clnl  &&\downarrow& \cdots&\downarrow  \\
%
&\text{solve}& 0 &\clnl&  \clnl     	   	       &   \clnl             & \clnl & \clnl 	  & &D   &      \cdots&D^{(k)}    \\  \cline{10-12}\Tstr
&\text{for}&0&\clnl  & \clnl      	   	       &   \clnl               & \clnl 	& \clnl & &\mu   &     \cdots&\mu^{(k)}   \\
\hline\hline 
\multirow{4}{*}{block 1}
&&&\clnl &\clnl &\clnl& \clnl&\clnl  &&\downarrow &\cdots&\downarrow  \\
%
&\text{solve} &0&   \clnl               &   \clnl               & \clnl&\clnl	  &\clnl  & &\nlbox{$E$}  &    \cdots&E^{(k)}  \\ 
\cline{7-12}\Tstr
&\text{for} &3& \clnl&\clnl                                   &&	\wv  & \wv' & \wv''& \nlbox{$\wv'''$}    &    \cdots&\wv^{(k+3)}     \\ \hline
\end{array} 
\end{equation*}

\end{table}
As we discuss in \S\ref{ss:init}, for this problem 
 we need to give initial guesses for 
$x$, $x'$, $y$, $y'$, $u$, $\wv'''$, and initial values for $\wv, \wv', \wv''$.

Using block 4, we can compute  derivatives for $x,y,\lam$ independently of the other blocks. At stage $-2$, as soon 
as $\lam''$ is available, we can find $u$ in block 3 by solving  (nonlinear) $F=0$. At stage 0, as soon as $u''$ is available from block 3, we can find $\mu$ in block 2  by solving  (linear) $D=0$, and then find $\wv'''$ in block 1 by solving  (nonlinear) $E=0$. When considering block 1, at each stage $-3$, $-2$, and $-1$   there is no equation to solve  for $\wv$, $\wv'$, and $\wv''$, respectively, and initial values are required for them. 
 \end{example}

\subsection{Block scheme more formally}

Denote
\begin{align}\label{eq:ikl}
\ikb{k}{l} &= \setbg{ (i,r)  \mid  \blkl{i}{l} \text{ and }r =  k+c_i\ge0},
\\
\jkb{k}{l} &= \setbg{ (j,r) \mid  \blkl{j}{l}
\text{ and } r=k+d_j\ge0}, \label{eq:jkl}
\\
\jkb{<k}{}&=\Setbg{(j,r)\mid 0 \le r < k+d_j},
\eqntxt{and}\label{eq:jkl_lk}
\\
\begin{split}
 \jkb{k}{>l}&=\Setbg{(j,r) \mid 
\blkl{j}{l+1:p} \text{ and } r=k+d_j\ge 0}\\
&= \bigcup _{r>l}\jkb{k}{r}. 
\end{split} \label{eq:jk_gtl}
\end{align}
By $\xjkb{k}{l}$ we mean a vector with components 
 the elements of the set\footnote{Some ordering must be agreed on but it does not matter.} 
$$\setbg{x_j^{(r)} \mid (j,r)\in \jkb{k}{l}};$$  similarly for 
$\xjkb{<k}{\phantom{l}}$ and $ \xjkb{k}{>l}$.
By $\fikb{k}{l}$ we denote a vector with 
components $$\setbg{f_i^{(r)} \mid (i,r)\in \ikb{k}{l}}.$$

Then (\ref{eq:eqnsbl}--\ref{eq:usingbl}) can be written concisely as: at stage $k$ solve in order for blocks $l=p, p-1, \ldots, 1$ the system
\begin{align}
\fikb{k}{l}(t, \xjkb{<k}{\phantom{l}}, \xjkb{k}{l}, \xjkb{k}{>l})=0
\label{eq:solb}
\end{align}
{for} $\xjkb{k}{l}$, where   $\xjkb{<k}{\phantom{l}}$ is found during previous stages,
and $\xjkb{k}{>l}$ is found at this stage but from  previous blocks.
\begin{example}\rm
\rm To illustrate the above, consider stage $k=-2$. The sets \rf{ikl,jkl,jk_gtl} are ({blank denotes $\emptyset$})
\begin{align*}\renewcommand{\arraystretch}{1.2}
\renewcommand{\setbg}[1]{\{#1\}}
\begin{array}{c|c|c|c|}
\multicolumn{1}{c}l &\multicolumn{1}{c}{\ikb{-2}{l}}&
\multicolumn{1}{c}{\jkb{-2}{l}}&\multicolumn{1}{c}{\jkb{-2}{>l}}\Bstr
\\ \hline
4& \setbg{(4,2), (5,2), (6,4)} &\setbg{(4,4), (5,4), (6,2)} &\\
3& \setbg{{(3,0)} }& \setbg{(3,0)} & \setbg{(6,2)}\\
2& &&\\
1&  & \setbg{(1,1)}& 
\end{array}
\end{align*}
and \rf{jkl_lk} is 
$\jk{<-2} = \setbg{(1,0), (4,<4)(4,<4), (5,<4), (6,<2)}$,
where $(j,<r)$ is short  for $(j, 0), (j,1), \ldots (j,r-1)$. 
Then 
\rf{solb} can be illustrated as
\begin{align*}\renewcommand{\arraystretch}{1.2}
\begin{array}{c|c|c|c|c|c}
\multicolumn{1}{c}l & \multicolumn{1}{c}{\fikb{-2}{l}}& 
\multicolumn{1}{c}{\xjkb{-2}{l}}&\multicolumn{1}{c}{\xjkb{-2}{>l}}
\\[1ex]  \hline
\Tstr
4& A'', B'', C^{(4)}& 
 { x^{(4)}, y^{(4)}, \lam''}&
 \\
3&F&  {u} &  { \lam''}\\
2&  &&\\
1&  &\wv'&\\ 
\end{array}
\end{align*}
where $\xjk{<-2} = \bigl(\wv;\, x,x', x'', x'''; \,y,y',y'', y'''; \,\lam, \lam')$.
Note that in  block 1, there is nothing to solve, and we give an (arbitrary) initial value for $\wv'$.
\end{example}

\smallskip

For each fine block $l$, the difference between global and local offsets is a non-negative constant, the \slt{lead time} \defterm{5.1} of this block: 
$$d_i-\hd_i=c_i-\hc_i = \Kl\ge 0\eqntxt{for all $\blkl{i}{l}$.}$$
Let
$\hk= k + K_l$, which we refer to as local stage.
Then 
\begin{align}
k+c_i = \hk+\hc_i\eqntxt{and}
k+d_j = \hk+\hd_j.
\label{eq:kkl}
\end{align}
Using \rf{kkl}, we  write  \rf{eqnsbl,varsbl} as 
\begin{align}
\eqntxt{solve} &\Setbg{f_i^{(\hk+\hc_i)} = 0\mid \blkl{i}{l} \text{ and }\hk+\hc_i\ge 0}
\label{eq:eqnsbl2}\\
\eqntxt{for} &\Setbg{x_j^{(\hk+\hd_j)} \mid 
\blkl{j}{l} \text{ and } \hk+\hd_j\ge 0}.
\label{eq:varsbl2}
\end{align}
For $k$ such that $k_l = k+K_l\ge0$, from \rf{eqnsbl2,varsbl2}, we have a square system, 
and therefore \rf{solb} is square. For $k_l>0$ it is always linear in its highest-order derivatives, as each of the equations is differentiated at least once.

\begin{lemma}\label{lemma:underdet}
If $\min_{\blkl{i}{l}} \hc_i =0$, then system \rf{solb} is underdetermined for any {$k$ such that $k_l=k+\Kl<0$}.
\end{lemma}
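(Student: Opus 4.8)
The plan is to reduce the statement to a counting inequality for the local offsets of block $l$ and then invoke the irreducibility of a fine block. Fix $l$ and a stage $k$ with $k_l=k+\Kl<0$, and put $m=-k_l\ge1$. By \rf{eqnsbl2,varsbl2} the system \rf{solb} for block $l$ at this stage has one equation for each $i$ in $R:=\{\,i\in B_l\mid\hc_i\ge m\,\}$ and one unknown for each $j$ in $C:=\{\,j\in B_l\mid\hd_j\ge m\,\}$. It therefore suffices to show $|R|\le|C|$, with strict inequality unless $R=C=\emptyset$; in that last case the solve step is empty and (vacuously) underdetermined.

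I would next record the structural facts. Since $\hc,\hd$ are valid local offsets of block $l$, there is a HVT $\pi$ of $\Sigma$ restricted to the rows and columns in $B_l$, a permutation of $B_l$, with $\hd_{\pi(i)}-\hc_i=\sigma_{i,\pi(i)}$ for every $i\in B_l$; as the left-hand side is a finite integer, $\sigma_{i,\pi(i)}$ is a finite entry of $\Sigma$ and hence $\ge0$, so $\hd_{\pi(i)}\ge\hc_i$. Within block $l$ the global and local offsets differ by the constant $\Kl$, so $d_{\pi(i)}-c_i=\hd_{\pi(i)}-\hc_i=\sigma_{i,\pi(i)}$, i.e.\ $(i,\pi(i))\in S_0$; thus $\pi$ is a transversal of the restriction $S_0|_{B_l}$ of the pattern $S_0$ to $B_l$, which, being a diagonal block of the fine BTF, is irreducible (\defterm{4.1}). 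The inequality $|R|\le|C|$ is now immediate: if $\hc_i\ge m$ then $\hd_{\pi(i)}\ge\hc_i\ge m$, so $i\mapsto\pi(i)$ maps $R$ injectively into $C$; in particular \rf{solb} is never overdetermined, for any $k_l$.

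It remains to exclude $|R|=|C|\ge1$. Suppose this holds. Then the injection $i\mapsto\pi(i)$ of $R$ into $C$ is a bijection, so $C=\pi(R)$. By the hypothesis $\min_{i\in B_l}\hc_i=0$ there is $i_0\in B_l$ with $\hc_{i_0}=0<m$, so $i_0\notin R$ and $R$ is a proper nonempty subset of $B_l$. Take any $i\in R$ and $j\in B_l\setminus\pi(R)$, so that $\hd_j<m$: if $(i,j)\in S_0$ then $\hd_j-\hc_i=\sigma_{i,j}\ge0$ would force $\hd_j\ge\hc_i\ge m$, a contradiction, so $(i,j)\notin S_0$. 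Ordering the rows of block $l$ as $(B_l\setminus R,\ R)$ and its columns as $(B_l\setminus\pi(R),\ \pi(R))$ therefore brings $S_0|_{B_l}$ into block-triangular form with two nonempty square diagonal blocks, of orders $N_l-|R|\ge1$ and $|R|\ge1$; hence $S_0|_{B_l}$ is reducible, contradicting \defterm{4.1}. So $R=\emptyset$, whence $|R|=0$ and $|C|=|\pi(R)|=0$, i.e.\ the solve step is empty. Combining the cases, for every $k$ with $k_l<0$ the system \rf{solb} has strictly more unknowns than equations, or is empty, and in neither case does it determine the block-$l$ unknowns; so it is underdetermined.

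The one step needing care is the passage in the last paragraph from ``the rows $R$ of $S_0|_{B_l}$ vanish on the columns outside $\pi(R)$'' to reducibility of the \emph{square} pattern $S_0|_{B_l}$: this uses that $\pi$ is a transversal of $S_0|_{B_l}$ (so $|\pi(R)|=|R|$ and the two displayed diagonal blocks are genuinely square) together with the characterization of the fine BTF as the finest block-triangular form reachable by independently permuting rows and columns (\defterm{4.1}). The remaining steps are routine bookkeeping with the offsets.
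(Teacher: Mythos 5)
Your proof is correct and in essence the same as the paper's: both arguments rest on the irreducibility (Strong Hall property) of the fine block's pattern $S_0$ together with the offset identity $\sij{i}{j}+\hk+\hc_i=\hk+\hd_j$ showing that the equations present at a stage with $k_l<0$ involve, through $S_0$, only that stage's unknowns, while the hypothesis $\min_{\blkl{i}{l}}\hc_i=0$ removes at least one equation from the block. You merely package the Strong Hall step in its contrapositive zero-submatrix (reducibility) form and insert an extra transversal argument for $|R|\le|C|$ that the paper's direct counting does not need; your explicit handling of the empty-stage case is a harmless refinement.
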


 \begin{proof}
%
%
Since the sparsity pattern $S_0$ \rf{S0} of a fine block is irreducible, by the Strong Hall Property  
\cite[\S4.2]{Pryce2014a}, any  set of 
$r\le N_l-1$ columns [resp. rows] contains  
elements of at least $r+1$ rows [resp. columns]. Therefore, any set of $r \le N_l-1$ equations contains at least $r+1$ variables $x_j$ with $\sij{i}{j}=d_j-c_i$. The highest-order derivative of such a $x_j$ in $ f_i^{(k + c_i)} = f_i^{(\hk+\hc_i)}$
is 
\[
\sij{i}{j}+ \hk+\hc_i = \hd_j-\hc_i + k+\hc_i = \hk+\hd_j\ge 0.
\]
Since at least one $\hk+\hc_i  <  0$, where $\blkl{i}{l}$, at local stage   $k_l < 0$ there are $r\le N_l-1$ equations with at least $r+1$ variables.
\qquad\end{proof}

\begin{remark}\rm
The condition $\min_{\blkl{i}{l}} \hc_i =0$ ensures that the local offsets for block $l$ are \slt{normalized} \defterm{2.1}. Canonical offsets are normalized, but not vice versa; see \cite{Pryce2014a}.
\end{remark}
\smallskip

For brevity, write \rf{solb} as 
\begin{align}
\Fikb{k}{l}(\xjkb{k}{l}) = 0.\label{eq:solb2}
\end{align}
Let $k_l= k+\Kl <0$. If $\ikb{k}{l} = \emptyset$, then there are no equations at this stage, and we  simply give initial values for $\xjkb{k}{l}$; otherwise we need initial guesses (trial values for a nonlinear solver)  $\xjkbig{k}{l}$ to find $\xjkb{k}{l}$ by solving
\[
\min \| \xjkbig{k}{l} - \xjkb{k}{l}\|_2\eqntxt{s.t.} \Fikb{k}{l}(\xjkb{k}{l}) = 0.
\]

We derive in  \S\ref{ss:init} an algorithm for determining which derivatives to initialize and in \S\ref{ss:bsa} an algorithm for the overall solution scheme for \rf{solb2}.

\section{Quasilinearity analysis}\label{sc:qla}

  Consider the basic scheme.
When $k>0$, an $f_i^{(k+c_i)}$ with $k+c_i>0$ is linear in its highest-order derivatives.
If $k\le 0$ and $c_i$ is such that $k+c_i=0$, 
the corresponding $f_i$ can be nonlinear in the derivatives we solve for, which are the $x_j^{(\sij{i}{j})}$ with $\sij{i}{j}=d_j-c_i$.

For an $f_i$, let  
\begin{align}\label{eq:yiset}
\yiset &= \setbg{ x_j^{(\sij{i}{j})} \mid \sij{i}{j} =d_j-c_i}.
\end{align}
%

\begin{definition}\label{def:fiql}
Equation $f_i=0$ is QL, if it is  linear in all 
$y\in\yiset$, and NQL otherwise.
\end{definition} 

\begin{definition}\label{def:daeql}
System \rf{eqns} is NQL (at stage $k$), if it contains an undifferentiated NQL $f_i$, and QL otherwise.
The DAE \rf{maineq} is NQL, if it is NQL at stage 0, and QL otherwise.
\end{definition}

\begin{example}\rm Consider \rf{mod2p} and Table~\ref{tbl:solscheme}. At stage $k=-2$, we have $k+c_i=0$ only for equation  $f_6 = F=0$.
%
Here, $Y_6 = \set{\lam'', u}$ as $x_3^{(d_3-c_{6})} = x_3^{(4-2)}= \lam''$ and 
$x_4^{(d_4-c_{6})} = x_4^{(2-2)} = u$; $x_1^{(d_5-c_{6})} = x_5' = v'$  does not appear in $F$ (note $1 = d_5-c_{6} >\sijc{6}{5} = 0$). 
This equation and the system are NQL at stage $k=-2$ .

\end{example}

 Our goal is to determine automatically if a system at stage $k\le 0$ is QL. (For stages $k>0$, they are always QL.)
 We achieve this by propagating 
 the  {\em offset} and {\em QLity} (pronounced {\em cuellity}) for each   variable in 
 a code list of the DAE.
 In \S\ref{ss:codelist},  we specify what we mean by code list, and in \S\ref{ss:varoffs} and \S\ref{ss:vartype}, we introduce the offset and QLity of a variable.
 The derivations in these two subsections are summarized in the algorithm in \S\ref{ss:algor}.
  In \S\ref{ss:qlafine}, we give a simple modification to it, so it works  correctly for the block scheme. 
 Appendix \S\ref{ss:oov} suggests a simple approach for an implementation of QL analysis using operator overloading, and also suitable for source code translation.

\pagebreak
\subsection{Code list}\label{ss:codelist}

An $f_i$ is described by an expression containing arithmetic operations, standard functions, and the $d^p/dt^p$ operator. Such an expression can be represented by {code list}
containing input, intermediate, and an output variables as follows.
The input variables are $t$ and $x_j$ for $\rng{j}{1}{n}$.
We rename them as $v_{-n } = t$ and $\rv_{j-n} = x_j$.
Then each subsequent variable $\rv_r$ for $r>0$ is 
defined using previous variables and an operation 
of arity $m> 0$:
\begin{align}
\rv_r = \phi_r (\rv_{i_1}, \rv_{i_2}, \ldots, \rv_{i_m}), \quad 
-n \le i_q < r \eqntxt{for each $\rng{q}{1}{m}$.}
\label{eq:code}
\end{align}
$\phi_r $ can be an arithmetic operation, elementary function, the identity function, $d^p/dt^p$, or a user-defined function.
We refer to a constant (an operation of arity 0) directly instead of assigning it to a variable. 
We refer to the last variable in the code list of an $f_i$ as an output variable. 

Assuming that we evaluate all the $f_i$'s and that the last $n$ variables are output variables, we can illustrate the above  as 
\newcommand{\noivs}{q}
\[
\bigl [\, \underbrace{\rv_{-n}}_t, \underbrace{\rv_{j-n}, \ldots, \rv_{0}}_{x_j}, \ 
\
\rv_1, \ldots, \rv_{\noivs},\   
\underbrace{\rv_{\noivs+1}, \ldots, \rv_{\noivs+n}}_{f_i}\, \bigr];
\]
 $\rv_1, \ldots, \rv_q$  are intermediate variables.

\subsection{Offset of a variable}\label{ss:varoffs}

Let $\rv$ be a variable\footnote{This $v$ and later $u$ are not to be confused with the $v$ and $u$ in \rf{mod2p}.} in a code list of an $f_i$. 
\begin{definition}\label{def:sigvec}
The {\em signature vector}
of $\rv$  is the $n$ vector with $j$th component 
\[
\sigma_j(\rv) = \begin{cases}
\text{the highest-order of derivative of $x_j$ on which $\rv$ formally depends, or}\\
\neginf \text{\ \ if $\rv$ does not depend on $x_j$.}\\
\end{cases}
\]
\end{definition}
Note that $\sigma_j(\rv)\le \sij{i}{j}$. ``Formally'' means that we do not consider symbolic simplifications in the expressions that arise; e.g. the highest-order derivative  of $x$ in 
$x''' + x'' + \lam x - x'''$ is 3 not 2. (See  
\cite{nedialkov2007solving} for more details and an algorithm for the computation and propagation of these vectors to compute the signature matrix of a DAE.)

For an equation number $i$, denote by $\Mset_i$ the set of indices for which $\sij{i}{j} = d_j-c_i$:
\begin{align*}
\Mset_i   & = \setbg{  j \mid  \sij{i}{j}=d_j -c_i }.
\end{align*}
Since we assume the DAE is structurally well posed,  each entry in a HVT is $\ge 0$, and therefore
 $\sij{i}{j}=d_j -c_i$ for at least one $j$. Hence $M_i\neq\emptyset$.
\begin{definition}\label{def:offsetvec}
The {\em offset} of $\rv$, with respect to $f_i$, is
\begin{equation}\label{eq:offsetdef}
\varoffs{i}{\rv}= 
\min_{j\in \Mset_i}{\bigl(\sij{i}{j} - \bcode{\rv}{j}\bigr)}.
 \end{equation}
\end{definition}
%

\begin{example}\rm
For $f_6 = F$, $\Mset_6 = \set{3,4}$.
Then 
\rf{offsetdef} becomes
\begin{align*}
\varoffs{6}{\rv}&=
\min_{j=3,4}{\bigl(\sijc{6}{j} - \bcode{\rv}{j}\bigr)}
= \min\setbg{  \sijc{6}{3} - \bcode{\rv}{3} , \,
 \sijc{6}{4} - \bcode{\rv}{4} }\\
& = \min\setbg{  2 - \bcode{\rv}{3} , \,
  - \bcode{\rv}{4}}.
\end{align*}
Table~\ref{tbl:offsets} shows, for the code list in the first column, the corresponding $\sigma_3(v_r)$, $\sigma_4(v_r)$, and   $\varoffs{6}{v_r}$.

\begin{table}[ht]\footnotesize
\caption{Code list and offsets for $f_6 = F$
\label{tbl:offsets}}
\vspace{-10pt}
\begin{align*}
\begin{array}{lll@{\hskip 12pt}lrrr}
\multicolumn{3}{c}{\text{code list}} &\multicolumn{1}{c}{\text{expression}} &\multicolumn{1}{c}{\sigma_{3}(v_r)} & \multicolumn{1}{c}{\sigma_{4}(v_r)} & \multicolumn{1}{c}{\alpha_{6}(v_r)}\\ \hline
\Tstr
&\lamvar &= \lam  & \lam&0 & \neginf &2\\
& \uvar &= u&u  &\neginf & 0 & 0\\
&\wvar &= v& v&\neginf & \neginf &\posinf\\
\hline\Tstr
&\rv_1 &= \sqr{\uvar}+\sqr{\wvar} & u^2+v^2& \neginf & 0  & 0
\\[0.5ex]
&\rv_2 
&= (L+c\rv_{-3})^2 & (L+c\lam)^2 & 0  & \neginf & 2
\\
&\rv_3 &= \Diff{\lamvar}{2} &\lam'' & 
 2  & \neginf & 0 
\\
f_6= &\rv_4&=  \rv_1-\rv_2+\rv_3&u^2+v^2-(L+c\lam)^2+\lam'' & 0 &0 & 0\\
\hline\\[-6ex]
\end{array}
\end{align*}
\end{table}
\end{example}
For convenience in the notation below, by  
``$v$ is an algebraic function $\phi$ of a set $U$ of variables $u$'' we mean $v = v_r$ in \rf{code} is obtained through a $\phi = \phi_r$ with arguments $u\in U = \set{ v_{i_1}, \ldots, v_{i_m}}$.

\smallskip

We propagate variable offsets through the code list of a DAE based on the following lemma.
\begin{lemma}\label{le:propagateoffset}
\begin{enumerate}[(i)]
\item If  $v$ is $\rv_{-n} = t$, then $\varoffs{i}{\rv_{-n}} =
\varoffs{i}{t} = 
\posinf$.
\item 
If $v$ is $\rv_{j-n} = x_j$ then
\begin{align*}
\varoffs{i}{\rv_{j-n}} = \varoffs{i}{x_j} = \begin{cases}
\sij{i}{j} & \text{ if } j\in \Mset_i \\
\posinf & \text{ otherwise.}
\end{cases}
\end{align*}
\item 
If $\rv$ is an algebraic function $\phi$ of a set $U$ of  variables $\uv$, then
\begin{align*}
\varoffs{i}{\rv} & = \min_{u\in U} \varoffs{i}{\uv}.
\end{align*}

\item  
If $\rv = d^pu/dt^p$, where $p\ge0$, then
\begin{align*}
\varoffs{i}{\rv}&= \varoffs{i}{\uv}-p. 
\end{align*}

\item If $\rv$ is an output variable (corresponding to  $f_i$), then $\varoffs{i}{v} =  \varoffs{i}{f_i}= 0$.
\label{en:rul5}
\end{enumerate}
\end{lemma}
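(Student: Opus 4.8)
The plan is to establish the five cases one at a time, each by a direct appeal to Definitions~\ref{def:sigvec} and~\ref{def:offsetvec}, working throughout with the extended-real conventions $\posinf - r = \posinf$ and $r - \neginf = \posinf$ for finite $r$; since the DAE is structurally well posed we have $\Mset_i\neq\emptyset$, so every minimum in \rf{offsetdef} is over a nonempty index set. Cases~(i) and~(ii) will be immediate: because $t$ depends on no $x_j$, every component $\sigma_j(t)=\neginf$, hence each term $\sij{i}{j}-\sigma_j(t)=\posinf$ and $\varoffs{i}{t}=\posinf$. Similarly $\sigma_k(x_j)=0$ for $k=j$ and $\neginf$ otherwise, so when $j\in\Mset_i$ the $k=j$ term of the minimum is $\sij{i}{j}$ and every other term is $\posinf$, giving $\varoffs{i}{x_j}=\sij{i}{j}$; when $j\notin\Mset_i$ every term is $\posinf$, giving $\varoffs{i}{x_j}=\posinf$.

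The two structural cases~(iii) and~(iv) reduce to how the signature vector behaves under the two kinds of code-list operation. For an algebraic (non-differential) $\phi$ no new derivatives are introduced -- this is exactly where the ``formal'' convention recorded after Definition~\ref{def:sigvec} is needed -- so $\sigma_j(v)=\max_{u\in U}\sigma_j(u)$ for each $j$; for $v=d^pu/dt^p$ the vector is shifted, $\sigma_j(v)=\sigma_j(u)+p$ (which reads correctly as $\neginf$ when $u$ does not depend on $x_j$). Case~(iv) then follows by pulling the constant $-p$ outside the minimum in \rf{offsetdef}. For case~(iii) I would substitute $\sigma_j(v)=\max_{u\in U}\sigma_j(u)$, use $a-\max_u b_u=\min_u(a-b_u)$ to rewrite $\sij{i}{j}-\sigma_j(v)=\min_{u\in U}\bigl(\sij{i}{j}-\sigma_j(u)\bigr)$, and then interchange the minima, $\min_{j\in\Mset_i}\min_{u\in U}=\min_{u\in U}\min_{j\in\Mset_i}$, recognizing the inner minimum as $\varoffs{i}{u}$.

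Case~(v) uses that the output variable of the code list of $f_i$ is $f_i$ itself, and that the $i$th row of the signature matrix is by construction the signature vector of that output variable, i.e. $\sigma_j(f_i)=\sij{i}{j}$ for all $j$; hence $\sij{i}{j}-\sigma_j(f_i)=0$ for every $j\in\Mset_i$ and $\varoffs{i}{f_i}=\min_{j\in\Mset_i}0=0$. I do not expect a genuine obstacle here: the only points that need care are the bookkeeping with $\pm\infty$ (so that a minimum formally containing $\posinf$ terms is still well defined, which holds because $\Mset_i$ is nonempty and, in cases~(ii) and~(v), contributes a finite term) and being explicit that the signature vectors are taken in the formal sense, so that the max-rule for algebraic operations and the shift-rule for $d^p/dt^p$ are exact identities rather than holding only up to symbolic cancellation.
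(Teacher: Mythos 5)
Your proposal is correct and follows essentially the same route as the paper's proof: each case is handled by direct appeal to the definitions, with case~(iii) using the max-rule $\sigma_j(v)=\max_{u\in U}\sigma_j(u)$ (which the paper cites from earlier work) followed by the min/max interchange, and case~(v) using $\sigma_j(f_i)=\sij{i}{j}$. The extra attention you give to the $\pm\infty$ bookkeeping and the nonemptiness of $\Mset_i$ is sound but not a departure from the paper's argument.
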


\smallskip 
\begin{proof}
\begin{enumerate}[{(i)}]
\item $v_{-n}=t$ does not depend on any $x_j$, so 
 $\sigma(\rv_{-n})$ contains only $\neginf$'s, and $\varoffs{i}{\rv_{-n}} = \infty$ for all $\rng{i}{1}{n}$.
\item 
 $\sigma_k(x_j) = 0$ if $k=j$ and $\neginf$ otherwise. Hence
\begin{align*}
\varoffs{i}{\rv_{j-n}}= \varoffs{i}{x_{j}} = \min_{k\in \Mset_i} \bigl(\sij{i}{k} - \sigma_k(\rv)\bigr) 
= \begin{cases}
\sij{i}{j} - \sigma_j(x_j) = \sij{i}{j} & \text{if } {j\in \Mset_i}\\
\posinf &\text{otherwise.}
\end{cases}
\end{align*}

\item Using\footnote{proved in \cite{nedialkov2007solving}} \  $\sigma_j(\rv) = \max_{u\in U}\sigma_j(\uv)$, 
\begin{align*}
\varoffs{i}{\rv} &= \min_{j\in \Mset_i}\bigl(\sij{i}{j} - \sigma_j(\rv)\bigr) 
= - \max_{j\in \Mset_i}\bigl(\sigma_j(\rv)-\sij{i}{j} \bigr) 
 = - \max_{j\in \Mset_i}\bigl( \max_{u\in U}\sigma_j(\uv)-\sij{i}{j} \bigr) 
\\
& = - \max_{j\in \Mset_i} \max_{u\in U}\bigl(\sigma_j(\uv)-\sij{i}{j} \bigr) 
 = -  \max_{u\in U}\max_{j\in \Mset_i}\bigl(\sigma_j(\uv)-\sij{i}{j} \bigr)  
\\
& = \min_{u\in U} \min_{j\in \Mset_i}\bigl(\sij{i}{j} -\sigma_j(\uv)\bigr)
 = \min_{u\in U} \varoffs{i}{\uv}.
\end{align*}

\item 
$\varoffs{i}{\rv} = \min_{j\in \Mset_i} \bigl(\sij{i}{j}-(\sigma_j(\uv)+p)\bigr)
= \varoffs{i}{\uv}-p.
$

\item 
If $\rv = f_i$, then
$\varoffs{i}{\rv} = \min_{j\in \Mset_i} \bigl((\sij{i}{j} - \sigma_j(f_i)\bigr) = \sij{i}{j} - \sij{i}{j} = 0.
$
\end{enumerate}
\qquad~\end{proof}

\subsection{QLity of a variable}\label{ss:vartype}
\text{}

\begin{definition}\label{def:typevec}
The {\em QLity} of  a variable $\rv$ in a code list of an $f_i$ is 
\begin{equation*}
\vartype_i(\rv)= 
\begin{cases} 
\typeI & \text{if $\rv$ does not depend on any $y\in \yiset$};\\
\typeN & \text{if $\rv$ depends nonlinearly on some 
$y\in \yiset$}; \eqntxt{and}\\
\typeL & \text{if $\rv$ depends only linearly on 
$y\in \yiset$}.
\end{cases}
\end{equation*}
\end{definition}
 
 \begin{remark}\rm
As in Definition~\ref{def:sigvec},  we mean ``formal" dependence. For example
$v$ depends nonlinearly on $x''$ in 
$v = (x'')^2 + x'' +x\lam -(x'')^2$, while the true dependence is linear. 
 \end{remark}

If $v$ is an algebraic function of a set $U$ of variables $u$, denote 
\begin{align}
U_0 = \set{u\in U \mid \varoffs{i}{u} = 0}.\label{eq:U0}
\end{align}

We propagate QLity  information based on the following  lemma.

\vspace{2pt}

\begin{lemma}
\begin{enumerate}[(a)]
\item 
  $\varoffs{i}{\rv}>0$ iff $\ttype_i(\rv) = \typeI$.
  
\item If $ \varoffs{i}{\rv}=0$, we have the following cases.
\begin{enumerate}[(i)]

\item If $\rv$ is an input variable $x_j$, then $\sij{i}{j}=0$ and
\begin{equation*}
\vartype_i(x_j)=
\typeL .
\end{equation*}

\item 
If $\rv$ is an algebraic function $\phi$ of a set $U$  of variables $\uv$  then, see \rf{U0}, 
\begin{align*}
\vartype_i(\rv)= 
\begin{cases}
\typeL & \text{if  $\ttype_i(\uv)=\typeL$ for all $u\in U_0$ and   $\phi$ is linear in all $u\in U_0$};    \ and \\ 
\typeN & \text{otherwise}.
\end{cases}
\end{align*}
\item If $\rv = d^pu/dt^p$, where $p>0$, then 
\begin{align*}
\vartype_i(\rv)= 
\typeL . 
\end{align*}
\item  If $\rv$ is an output variable (corresponding to  $f_i$), then $\ttype_i(v) = \typeL$ or \typeN.\\[-0.5ex]

\end{enumerate}
\end{enumerate}
\end{lemma}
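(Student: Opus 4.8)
The plan is to prove part~(a) directly from the definitions, and then to dispatch each case of part~(b) by combining part~(a) with the matching case of Lemma~\ref{le:propagateoffset}. For~(a), recall $\yiset=\setbg{x_j^{(\sij{i}{j})}\mid j\in\Mset_i}$ and that $\sigma_j(v)\le\sij{i}{j}$ always, so $v$ formally depends on some $y\in\yiset$ exactly when $\sigma_j(v)=\sij{i}{j}$ for some $j\in\Mset_i$; hence by Definition~\ref{def:offsetvec},
\[
\varoffs{i}{v}=\min_{j\in\Mset_i}\bigl(\sij{i}{j}-\sigma_j(v)\bigr)>0
\iff \sigma_j(v)<\sij{i}{j}\ \text{for every}\ j\in\Mset_i
\iff v\ \text{depends on no}\ y\in\yiset,
\]
which is $\ttype_i(v)=\typeI$ by Definition~\ref{def:typevec}. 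This proves~(a), and shows that $\varoffs{i}{v}=0$ forces $\ttype_i(v)\in\setbg{\typeL,\typeN}$ together with $\sigma_{j_0}(v)=\sij{i}{j_0}$ for some $j_0\in\Mset_i$, i.e.\ $v$ depends on $x_{j_0}^{(\sij{i}{j_0})}\in\yiset$. Parts~(b)(i) and~(b)(iv) are then immediate: for an output variable Lemma~\ref{le:propagateoffset}(v) gives $\varoffs{i}{v}=0$, so $\ttype_i(v)\in\setbg{\typeL,\typeN}$; and for $v=x_j$, Lemma~\ref{le:propagateoffset}(ii) turns $\varoffs{i}{x_j}=0$ into $j\in\Mset_i$ with $\sij{i}{j}=0$, so $v=x_j=x_j^{(\sij{i}{j})}$ lies in $\yiset$, occurs linearly in itself and in no other member of $\yiset$, whence $\ttype_i(v)=\typeL$.

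For~(b)(iii), let $v=d^pu/dt^p$ with $p>0$ and $\varoffs{i}{v}=0$. By Lemma~\ref{le:propagateoffset}(iv), $\varoffs{i}{u}=\varoffs{i}{v}+p=p>0$, so by~(a) $u$ depends on no member of $\yiset$; thus $\sigma_j(u)\le\sij{i}{j}-p$ for all $j\in\Mset_i$, and for each $j\in\Mset_i$ on which $v$ does depend, $\sigma_j(v)=\sigma_j(u)+p=\sij{i}{j}$, so $\sigma_j(u)=\sij{i}{j}-p$ and the matching member of $\yiset$ is exactly the \emph{new} top $x_j$-derivative created by differentiating $u$ precisely $p\ge1$ times. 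The key fact I would invoke --- elementary chain-rule bookkeeping, in the spirit of the signature-vector propagation of \cite{nedialkov2007solving} --- is that repeated differentiation introduces such new top derivatives \emph{affinely}: if $\sigma_j(w)=m$ then $w^{(p)}=\bigl(\partial w/\partial x_j^{(m)}\bigr)x_j^{(m+p)}+R$ with $\sigma_j\bigl(\partial w/\partial x_j^{(m)}\bigr)\le m<m+p$ and $\sigma_j(R)\le m+p-1$ (a one-line induction on $p$, since the coefficient of the running top derivative stays $\partial w/\partial x_j^{(m)}$, whose order never reaches it), and moreover no monomial of $w^{(p)}$ can carry two distinct variables at their new top orders, because attaining the top order for $x_j$ spends all $p$ differentiations on a single factor. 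Applying this to $w=u$, the expression $v=u^{(p)}$ is affine in the collection of $y\in\yiset$ it depends on; since $\varoffs{i}{v}=0$ it depends on at least one such $y$, so $\ttype_i(v)=\typeL$.

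For~(b)(ii), let $v$ be an algebraic function $\phi$ of $U=\setbg{u_1,\ldots,u_m}$ with $\varoffs{i}{v}=0$. By Lemma~\ref{le:propagateoffset}(iii), $\varoffs{i}{v}=\min_{u\in U}\varoffs{i}{u}=0$, so $U_0\neq\emptyset$, and every $u\in U\setminus U_0$ has $\varoffs{i}{u}>0$, hence $\ttype_i(u)=\typeI$ by~(a): such $u$ carry no $\yiset$-dependence. Therefore all $\yiset$-dependence of $v$ runs through $U_0$, and since $\varoffs{i}{v}=0$, some $u\in U_0$ depends on some $y\in\yiset$. Reading ``$\phi$ linear in all $u\in U_0$'' as ``$\phi$ is affine in its $U_0$-arguments with coefficients not involving $U_0$'', we get $v=\sum_{u\in U_0}c_u u+d$ with $c_u,d$ free of $U_0$, hence (by the previous sentence) free of $\yiset$-dependence; so if additionally $\ttype_i(u)=\typeL$ for all $u\in U_0$ (each being affine in $\yiset$, and $\neq\typeI$ since $u\in U_0$), then $v$ is affine in $\yiset$ and depends on some $y$, giving $\ttype_i(v)=\typeL$. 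Otherwise $\phi$ is not affine in $U_0$, or some $u_0\in U_0$ has $\ttype_i(u_0)=\typeN$; in either case one checks directly --- using that each $u\in U_0$ depends on some member of $\yiset$, and that $u_0$ (if $\typeN$) occurs formally in $\phi$ --- that $v$ is not affine in $\yiset$, so $\ttype_i(v)=\typeN$.

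Parts~(a),~(b)(i),~(b)(iv) are essentially automatic; the main obstacle is the differentiation fact underlying~(b)(iii) --- that differentiating $p\ge1$ times leaves an expression affine in its freshly-appearing highest-order derivatives --- together with the related bookkeeping in~(b)(ii). Throughout, ``linear/nonlinear dependence'' must be understood in the \emph{formal} sense (no symbolic cancellation, as in the remarks following Definitions~\ref{def:sigvec} and~\ref{def:typevec}) and as (non)affineness in the \emph{whole} set $\yiset$ rather than one $y$ at a time; pinning down the exact statement of that differentiation fact and the precise reading of ``$\phi$ linear in all $u\in U_0$'' is where the care lies, after which the case analysis is routine.
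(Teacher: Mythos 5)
Your proof is correct and follows essentially the same route as the paper's: part (a) directly from the definitions of $\varoffs{i}{\rv}$ and $\yiset$, (b)(i) and (b)(iv) as immediate consequences, (b)(iii) from the fact that differentiating $p\ge1$ times introduces the new highest-order derivatives affinely, and (b)(ii) by tracking the dependence through $U_0$. You only spell out details the paper leaves implicit --- the chain-rule bookkeeping behind (b)(iii) and the joint-affineness reading of ``linear in all $u\in U_0$'' and of formal dependence --- without changing the argument.
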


\begin{proof}
\begin{enumerate}[(a)]
\item 
We have $\varoffs{i}{\rv} > 0$ iff   $\sij{i}{j} > \bcode{\rv}{j}$ for all  $j\in \kset{i}$ iff $\rv$ does not depend on 
any $y \in \yiset$ iff $\ttype_i(\rv) = \typeI$.

\item 
If $\varoffs{i}{\rv} = 0$, then $\rv$ depends on at least one $y\in \yiset$. 
\begin{enumerate}[{(i)}]
\item  If $v=x_j$ then $\sij{i}{j} = \sigma_{j}(x_j) = 0$.
That is, the highest-order derivative of $x_j$ in  $f_i$ is $\sij{i}{j} = 0$, and $\ttype_i(x_j)=\typeL$.
%

\item

If $\ttype_i(u)=\typeL$ for all $u\in U_0$, and  $\phi$ is linear in those $u$'s, then 
$v$ depends only linearly on $y\in \yiset$, and $\ttype_i(v)=\typeL$. 
Otherwise, $\ttype_i(u) = \typeN$ for some $u\in U_0$
or $\phi$ is nonlinear 
in some $u\in U_0$. In both cases, $\ttype_i(v) = \typeN$.

  \item Since $\varoffs{i}{\rv}=0$, $\rv$ will not be differentiated further in the code list (of $f_i$), and since $p>0$, $\rv$ depends linearly on its highest-order derivatives. 
Hence $\ttype_i(\rv) = \typeL$. 

\item This follows from (a) and definition~\ref{def:typevec}.
\end{enumerate}
\end{enumerate}
\qquad~\end{proof}

 \pagebreak
 Since $\varoffs{i}{v}=0$ implies $v$ will not be differentiated   further, we have 
 \begin{corollary}
 If $\vartype_i(\rv) = \typeN$ for some intermediate $\rv$ in a code list for $f_i$,  then 
$\vartype_i(f_i) = \typeN$.
 \end{corollary}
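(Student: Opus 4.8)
The plan is to follow $\rv$ forward through the code list of $f_i$ along a chain of data dependences and argue that the QLity label $\typeN$ can never be lost before the output is reached. First I would observe that $\varoffs{i}{\rv}\ge 0$ for \emph{every} variable of the code list: a subexpression of $f_i$ depends on $x_j$ to no higher order than $f_i$ itself does, so $\sigma_j(\rv)\le\sij{i}{j}$ for all $j$, and hence every term $\sij{i}{j}-\sigma_j(\rv)$ of \rf{offsetdef} is nonnegative. Now part (a) of the QLity-propagation lemma says $\varoffs{i}{\rv}>0$ iff $\vartype_i(\rv)=\typeI$; since $\vartype_i(\rv)=\typeN\ne\typeI$, the offset $\varoffs{i}{\rv}$ is not positive, and with $\varoffs{i}{\rv}\ge 0$ this pins it to $\varoffs{i}{\rv}=0$. (This is exactly the content of the remark preceding the corollary: $\rv$ is not differentiated again in the code list.)

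Because the code list computes $f_i$ and contains no unused variable, there is a chain $\rv=w_0,w_1,\dots,w_m=f_i$ in which each $w_{t+1}$ is defined by an operation having $w_t$ among its arguments. I would then show, by induction on $t$, that $\varoffs{i}{w_t}=0$ and that each $w_{t+1}$ (for $t\ge 0$) is an algebraic function of its arguments, never $d^p/dt^p$ with $p>0$. The base case is the previous paragraph. For the step, if $\varoffs{i}{w_t}=0$ then $w_{t+1}$ cannot be $d^p w_t/dt^p$ with $p>0$, since Lemma~\ref{le:propagateoffset}(iv) would then force $\varoffs{i}{w_{t+1}}=-p<0$, contradicting $\varoffs{i}{\cdot}\ge 0$; so $w_{t+1}$ is an algebraic function $\phi$ of a set $U\ni w_t$, and Lemma~\ref{le:propagateoffset}(iii) gives $\varoffs{i}{w_{t+1}}=\min_{u\in U}\varoffs{i}{u}=0$, using $w_t\in U$, $\varoffs{i}{w_t}=0$ and $\varoffs{i}{\cdot}\ge 0$. (An identity step counts as a linear algebraic operation and is covered here too.)

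The QLity induction then runs along the same chain. We have $\vartype_i(w_0)=\vartype_i(\rv)=\typeN$ by hypothesis; assume $\vartype_i(w_t)=\typeN$. Writing $w_{t+1}$ as an algebraic function $\phi$ of a set $U\ni w_t$, the fact that $\varoffs{i}{w_t}=0$ places $w_t$ in the set $U_0$ of \rf{U0}. By case (b)(ii) of the QLity-propagation lemma, $\vartype_i(w_{t+1})=\typeL$ would require $\vartype_i(u)=\typeL$ for \emph{every} $u\in U_0$; this fails, since $w_t\in U_0$ and $\vartype_i(w_t)=\typeN\ne\typeL$. Hence $\vartype_i(w_{t+1})=\typeN$, and taking $t=m-1$ gives $\vartype_i(f_i)=\vartype_i(w_m)=\typeN$.

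Most of this is bookkeeping, and I do not expect a serious obstacle. The one point that wants care — and the only place the statement could conceivably break — is the existence of the chain $w_0,\dots,w_m$: that a code list of $f_i$ carries no dead code, so that the intermediate variable $\rv$ really does feed, directly or transitively, into the output $f_i$. Granting that, the whole argument collapses to a short induction, because once $\varoffs{i}{\rv}=0$ only the algebraic case (b)(ii) of the QLity-propagation lemma is available downstream, and (b)(ii) propagates $\typeN$.
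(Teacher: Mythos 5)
Your proof is correct and follows essentially the same route as the paper, which compresses the whole argument into the observation preceding the corollary: since $\varoffs{i}{\rv}=0$, the variable $\rv$ cannot be differentiated further, so every operation between $\rv$ and the output $f_i$ is algebraic and case (b)(ii) of the QLity-propagation lemma forces $\typeN$ to persist along the dependence chain. Your explicit induction (offset stays $0$, hence no $d^p/dt^p$ step, hence $\typeN$ propagates) is just a detailed write-out of that same idea, and your caveat about the chain's existence matches the paper's implicit assumption that every variable in the code list of $f_i$ feeds into its output.
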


That is, we can conclude from the first $\rv$ with $\ttype_i(\rv)= \typeN$ that $f_i$ is NQL.

\begin{example}
\rm
Consider $f_1 = A$ in \eqref{eq:mod2p}. Here $\kset{1}=\setbg{1,3}$, $\yset{1}=\setbg{x'',\lam}$; we wish
to determine if $A$ is 
QL in $x''$ and $\lambda$. In Table~\ref{tbl:A},
$\vartype_1(f_1)=\typeL$, and $A$ is QL.

 \begin{table}[ht]\footnotesize
 \caption{\label{tbl:A}Code list and propagation of offsets and QLities for $f_1=A$}
 \vspace{-10pt}
 \setcounter{vvc}{1}
 \begin{align*}
\begin{array}{lll@{\hskip 12pt}lcc}
\multicolumn{3}{c}{\text{code list}} &\multicolumn{1}{c}{\text{expression}}&
	 \varoffs{1}{\rv_r} 
	& \ttype_1(\rv_r)
	 \\
	  \hline \Tstr	  
&\xvar &= x & x&2 & \typeI \\
&\lamvar &= \lam &\lam& 0 & \typeL\\	  
\hline
\Tstr
&\vv &= \Diff{\xvar}{2} &x''  & 0 & \typeL\\
&\rv_2 &= \xvar \lamvar &x\lam&   0 & \typeL\\
f_1 = &\rv_3 \ &= \rv_1+\rv_2& x''+x\lam &0 & \typeL \\
\hline
 \end{array}
\end{align*}

\end{table}
\end{example}

\begin{example}\label{ex:evaluateF}\rm
Now consider $F$.
  We have $\kset{6} = \setbg{3, 4}$, $\yset{6} = \setbg{u, \lam''}$, and we need to determine if $F$ is QL in $\lam''$ and $\uv$.
In Table~\ref{tbl:F}  $\vartype_6(\rv_1)=\typeN$,  and we can conclude from it that  $F$ is NQL.

\begin{table}[ht]\footnotesize
\setcounter{vvc}{-2}
\caption{\label{tbl:F}Code list and propagation of offsets and QLities for $f_6 = F$}
\vspace{-10pt}
\begin{align*}
\begin{array}{cll@{\hspace{20pt}}l@{\hspace{20pt}}ccc}
\multicolumn{3}{c}{\protect\text{code list}}    & 
\text{expression}   & 
\varoffs{6}{\rv_r} &
\ttype_6(\rv_r)  \\ \hline
	 \Tstr
&\lamvar & = \lam & \lam&2 & \typeI\\
&\uvar & = u &u& 0 & \typeL\\
&\wvar & = v &v& \posinf & \typeI\\
\hline
\Tstr
& \rv_1 &= \sqr{\uvar}+\sqr{\wvar} & u^2+v^2 & 0 & \typeN
\\
&\rv_2 & = (L+c\lamvar)^2 & (L+c\lam)^2 & 2& \typeI
\\
&\rv_3 & = \lam''  & \lam'' & 0&\typeL
\\
f_6 =&\rv_{4} &= \rv_1-\rv_2+\rv_3 &u^2+v^2- (L+c\lam)^2+\lam''&0&\typeN
\\
\hline
\end{array}
\end{align*}

\end{table}

\end{example}

\newcommand{\qlalgo}{{\sc ql\_analysis}\xspace}
\subsection{Quasilinearity analysis algorithm}\label{ss:algor}

From \S\ref{ss:varoffs} and \S\ref{ss:vartype}, we derive 
 in Figure~\ref{fig:algor2} algorithm \qlalgo,  which determines if an $f_i$ is QL/NQL. 
 Since $\varoffs{i}{\rv}>0$ iff  $\vartype_i(v)=\typeI$,
in practice, we need to propagate only $\typeL$ and $\typeN$.

\begin{figure}[ht]
\begin{algo}{\qlalgo}
\noindent\label{alg:algor}
 \Input
\\
\>code list for evaluating an $f_i$ in \rf{maineq}, $\sij{i}{j}$ for all $\rng{j}{1}{n}$
\\
\Output\\
\>$\vartype_i(f_i)$\\
\Compute\\
\>$\Mset_i \assa \set{ j \mid \sij{i}{j} = d_j-c_i}$\\
\>\FOR each variable $\rv$ in the code list
\\
\>\>\IF $\rv$ is an input variable $x_j$  \THEN
\\
\>\>\>\IF $j\in \Mset_i$ \THEN \\
\>\>\>\>$\varoffs{i}{\rv}\assa \sij{i}{j}$\\
\>\>\>\>\IF $\varoffs{i}{\rv}=0$ \THEN $\vartype_i(\rv) \assa \typeL$ \\
\>\>\>\ELSE $\varoffs{i}{\rv} \assa \posinf$\\
\>\>\ELSEIF $\rv$ is  $t$ \THEN \\
\>\>\> $\varoffs{i}{\rv}\assa \posinf$ \\
\>\>\ELSEIF $\rv$ is an algebraic function $\phi$ of a set $U$ of previous variables $\uv$ \THEN
\\
\>\>\>$\varoffs{i}{\rv}  \assa  \min_{u\in U} \varoffs{i}{\uv} $\\
\>\>\>\IF $\varoffs{i}{\rv} =0$\\
\>\>\>\>$U_0 \assa \set{ u\in U\mid \varoffs{i}{u} = 0}$\\
\>\>\>\>\IF $\ttype_i(u)=\typeL$ for all $u\in U_0$ and $\phi$ is linear in $U_0$ \THEN \\
\>\>\>\>\>$\vartype_i(\rv) \assa \typeL$\\
\>\>\>\>\ELSE $\vartype_i(f_i) \assa \typeN$, return\\
\>\>\ELSE \qquad \verb+%+ $\rv$ is $d^p u/dt^p$ 
\\
\>\>\>$\varoffs{i}{\rv} \assa \varoffs{i}{\rv}-p $\\
\>\>\>\IF $\varoffs{i}{\rv}=0$ \THEN $\vartype_i(\rv) \assa \typeL$ 
 \end{algo}
\caption{\label{fig:algor2} Algorithm for determining if an $f_i$ in the basic scheme is QL.}
\end{figure}

\subsection{Quasilinearity of a fine block}
\label{ss:qlafine}

At local stage $k_l >0$, system \rf{solb2} is linear in 
$\xjkb{k}{l}$. We are interested in determining if
it is linear in $\xjkb{k}{l}$ at local stage $k_l\le 0$.
Similarly to the definition of $\yiset$ in  \rf{yiset}, 
used to define  quasilinearity of an $f_i$ in \rf{solb2}, let 
\begin{align*}
\ziset &= \setbg{ x_j^{(\sij{i}{j})} \mid \blkl{j}{l} \text{ and } \sij{i}{j} =d_j-c_i}.
\end{align*}
 
\begin{definition}\label{def:fiql2}
Equation $f_i=0$ with $\blkl{i}{l}$ is QL, if it is linear in all $y\in\ziset$, and NQL otherwise.
\end{definition} 

\begin{definition}\label{def:daeql2}
System \rf{solb2} is NQL if it contains an undifferentiated NQL $f_i$, and QL otherwise.
\end{definition}

When considering block $l$,  algorithm \qlalgo can be readily adapted by changing the line 
\[
\Mset_i \assa \set{ j \mid \sij{i}{j} = d_j-c_i}
\eqntxt{to}
\Mset_i \assa \setbg{j\mid \blkl{j}{l} \text{ and }
\sij{i}{j}=d_j-c_i}
\]
and keeping the rest of this algorithm the same.
 
\begin{example} \rm Consider block $l=3$. 
For equation $f_3 = F=0$, $\blkl{3}{3}$.
Then 
\begin{align*}
\Mset_3 &= 
  \setbg{j\mid \blkl{j}{3} \text{ and } \sijc{3}{j}=d_j-c_3 } = 
\set{ 3 },
\end{align*}
 and we need to determine if $F$ is QL in $x_3=u$.
Now the index of $x_6=\lam$ is $6\notin \Mset_3$ and therefore $\lam''\notin Z_3$; cf. Example~\ref{ex:evaluateF}. We show in Table~\ref{tbl:Fb} the initialization and propagation of offsets and QLity  information for the code list in Table~\ref{tbl:F}.

\begin{table}[ht]\footnotesize
\caption{\label{tbl:Fb}Code list and propagation of offsets and QLities for $f_3 = F$ of block 3 in the BTF of \protect\rf{mod2p}}
\vspace{-10pt}
\begin{align*}
\begin{array}{cll@{\hspace{20pt}}l@{\hspace{20pt}}ccc}
\multicolumn{3}{c}{\protect\text{code list}}    & 
\text{expression}   & 
\varoffs{3}{\rv_r} &
\ttype_3(\rv_r)  \\ \hline
	 \Tstr
&\rv_{-5} & = v & v&\infty & \typeI\\
&\rv_{-3} & = u &u& 0 & \typeL\\
&\rv_0 & = \lam&\lam& \posinf & \typeI\\
\hline
\Tstr
& \rv_1 &= \sqr{\rv_{-3}} + \sqr{\rv_{-1}}& u^2 +v^2& 0 & \typeN\\
&\rv_2 & = (L+c\lamvar)^2 & (L+c\lam)^2 & 2& \typeI
\\
&\rv_3 & = \lam''  & \lam'' & 0&\typeL
\\
f_3 =&\rv_{4} &= \rv_1-\rv_2+\rv_3 &u^2+v^2- (L+c\lam)^2+\lam''&0&\typeN
\\
\hline
\end{array}
\end{align*}

\end{table}

\end{example}

\section{Overall algorithm}\label{sc:overall}
In  \S\ref{ss:init} we present first an algorithm for finding the indices of derivatives that need to be initialized,
and then in \S\ref{ss:bsa} we    give an overall algorithm implementing the block solution scheme.


\subsection{Initialization}\label{ss:init}

Consider block $l$. For $k$ such that $k_l=k+\Kl<0$, we need to initialize 
$\xjkb{k}{l}$, that is, give values for 
\begin{align}
\Setbg{  {x_j^{(r)}} \mid 
\blkl{j}{l} \text{ and } r = k+d_j  \ge 0}.\label{eq:initval}
\end{align}
We have $k+d_j = \hk+\hd_j \ge 0$ when 
\[
k \ge -\max_{\blkl{j}{l}}d_j \eqntxt{or equivalently}
\hk \ge -\max_{\blkl{j}{l}}\hd_j.
\]
When $k_l < -\max_{\blkl{i}{l}} \hc_i$, there are no equations at this stage and in this block, and \rf{initval} are {\em initial values}, otherwise \rf{initval} are {\em initial guesses} (trial values).

When $k_l=0$ and \rf{solb2} is NQL (see \S\ref{ss:qlafine}), also initial guesses for 
\[
\Setbg{ {x_j^{(\hd_j)}}  \mid 
\blkl{j}{l} }
\]
are required. 

 \newcommand{\inalgo}{{\sc fine\_block\_initialization}}

This is expressed by  algorithm \inalgo, Figure~\ref{fig:algor}, which  finds the set of indices 
$(j,r)$  for derivatives $x_j^{(r)}$ that require initial values and the set of indices for derivatives that require initial guesses. By \cite[Theorem 4.5]{NedialkovPryce2012a},
 these are   minimal sets.

\begin{figure}[ht]
\begin{algo}{\inalgo}\label{alg:init}\Input\\
\>local  canonical offsets $c_i^*$, $d_j^*$,   \quad$\rng{i,j}{1}{N_l}$\\
\>$\qlvec{} =1$ if block   is QL and 0 otherwise\\
\>$b = \sum_{i=1}^{l-1}N_i$, block $l$ starts at index $b+1$
\\
\Output
\\
\>$\ivsindx$ set of indices for derivatives that require
{\em initial values}  \\
\>$\igsindx$ set of indices for derivatives that require
{\em initial guesses}  \\
\Compute
\\
\>$\ivsindx \assa \emptyset$\\
\>$\igsindx \assa \emptyset$\\ 
\>\FOR $q \assa-\max d_j^*$ \  to\  \ $-\qlvec{} $\\[0.5ex]
\>\>$M \assa \set{j\mid q+d_j^* \ge 0}$\\
\>\> $J \assa \setbg{ (j+b,q+d_j^*) \mid  j\in M}$
\\
\>\>\IF $q < -\max c_i^*$ \ \THEN \qquad  (no equations)
\\
\>\>\>\,$\ivsindx \assa \ivsindx\cup J$\\
\>\>\ELSE $\igsindx \assa \igsindx\cup J  $
\end{algo}
\caption{\label{fig:algor}Initialization for a fine block. 
To simplify the notation in this algorithm, for block $l$ we denote by $c_i^*$ and $d_j^*$ its local canonical offsets and assume they are indexed from $1$ to $N_l$;
that is $c_i^* = \widehat c_{b+i}$ and $d_j^* = \widehat d_{b+j}$. 
}
\end{figure}

\begin{example}\rm For the blocks in our example,  this algorithm produces sets as follows.
\begin{enumerate}[$-$]

\item 

Block $l=4$ is QL,  
$\gamma  = 1$, $b = 3$,  $\max d_j^* = \max c_i^* = 2$.
When $q = -2$, $M= \set{1,2}$, 
$\igsindx = J =  \set{ (4,0), (5,0)}$; when $q = -1$, 
$M$ is the same, $J =  \set{ (4,1), (5,1)}$, 
and   $$\igsindx = \set{ (4,0), (5,0), (4,1), (5,1)} 
\eqntxt{and} \ivsindx = \emptyset.$$

\item Block $l=3$ is NQL, $\gamma = 0$, $b = 2$, $\max d_j^* = 0$,
and $\max c_i^* = 0$. We have one iteration with $q=0$, $M = \set{1}$, and $$\igsindx = \set{ (3,0)}
\eqntxt{and} \ivsindx = \emptyset.$$

\item Block $l=2$ is QL, $\gamma = 1$, $\max d_j^* = 0$, 
the loop does not execute, and $\igsindx = \emptyset$ and $\ivsindx = \emptyset$

\item
Block $l=1$ is NQL,  
$\gamma  = 0$, $b = 0$,  $\max d_j^* = 3$, and $\max c_i^* = 0$.
After iterations $q = -3, -2, -1$, we have 
$$
\ivsindx = \set{ (1,0), (1,1), (1,2)},
$$
and after iteration $q = 0$,
\[\igsindx = \set{ (1,3)};
\]
$M = \set{1}$ through all iterations.
 
\end{enumerate}
Since the variables are in the order $v,u,\mu, x,y,\lam$,
the above $\igsindx$ and $\ivsindx$ sets 
imply that 
\begin{align*}
\begin{array}{ll}
x,y, \, x', y'\, u,\, v''' &\text{ require initial guesses and};\\
v, v', v''                   &\text{ require initial values}.
\end{array}
\end{align*}
\cf Table~\ref{tbl:solschbtf}.
\end{example}

\newcommand{\solvealgo}{{\sc solving\_for\_derivatives}}

\subsection{Block scheme algorithm}\label{ss:bsa}
Here we assume  that $\Sigma$, fine BTF, canonical local and global offsets are computed, the quasilinearity 
of  each $f_i$ is determined, and appropriate derivatives are given initial values or guesses (according 
algorithm \qlalgo).
Also, we  assume that the $\Kl$ are available (see the companion paper).
Finally, algorithm \solvealgo, Figure~\ref{alg:final}, gives the algorithm for 
stage $k$ of the block scheme.

\begin{figure}[ht]
\begin{algo}{\solvealgo}
\Input\\
\>$k$ stage number\\
\>$K_l$ for each block $l$\\
\>$\hc$, $\hd$ local canonical offsets\\
\>$\gamma_i=1$ if $f_i$ is QL, 0 otherwise, $\rng{i}{1}{n}$\\
\>$B_l$ for each block $l$
\\
\Output\\
\>$\~x_{J_k} = (\xjkb{k}{1}, \ldots, \xjkb{k}{p})$\\
\Compute\\
\>\FOR $l = p, p-1,\ldots, 1$\\
\>\>$k_l \assa  k + K_l$\\
\>\>\IF $k_l< -\max_{\blkl{i}{l}} \hc_i$ \THEN \  \CONTINUE\\
\>\>$\ikb{k}{l} \assa \setbg{ (i,r)  \mid  \blkl{i}{l} \text{ and }r =  k_l+\hc_i\ge0}$
\\
\>\>$\jkb{k}{l} \assa \setbg{ (j,r) \mid  \blkl{j}{l}
\text{ and } r=\hk+\hd_j\ge0}$\\
\>\>\IF there is $(i,0)\in \ikb{k}{l}$ and $\gamma_i=0$ \THEN\\
\>\>\>qlflag $\assa$ \FALSE\\
\>\>\ELSE qlflag $\assa$ \TRUE\\
\>\>\IF $\hk<0$ \THEN \\
\>\>\>\IF qlflag $=$ \FALSE\\
\>\>\>\>\,solve $\min \| \xjkbig{k}{l} - \xjkb{k}{l}\|_2\text{ \ s.t. \ } \text{nonlinear }\Fikb{k}{l}(\xjkb{k}{l}) = 0$\\
\>\>\>\ELSE solve $\min \| \xjkbig{k}{l} - \xjkb{k}{l}\|_2\text{ \ s.t. \ } \text{linear \phantom{non}}\Fikb{k}{l}(\xjkb{k}{l}) = 0$\\
\>\>\ELSEIF $\hk=0$ and qlflag $=$ \FALSE \THEN \\
\>\>\>\,solve nonlinear $\Fikb{k}{l}(\xjkb{k}{l})=0$\\
\>\>\ELSE solve linear\phantom{non} $\Fikb{k}{l}(\xjkb{k}{l})=0$
\end{algo}
\caption{Algorithm for computing derivatives at stage $k$ using fine BTF of the DAE.\label{alg:final} 
When $k_l<0$, $\Fikb{k}{l}(\xjkb{k}{l}) = 0$ is underdetermined and determined otherwise.}
\end{figure}

\newcommand{\bigo}{{O}}
\newcommand{\nopers}{\tau}

\section{Conclusion}\label{sc:concl}
We showed how to perform QL analysis of a DAE and how to combine it with its fine block triangularization to obtain a solution scheme. Provided the problem can be decomposed into many small fine blocks, this scheme  results in solving much smaller problems compared to the basic scheme. For instance, for the distillation column in  \cite{NedialkovPryce2012b}, we have 52 blocks of size 1 and 11 blocks of size 7. With the basic scheme, we would solve systems of size 129 at stages $\ge 0$, while with the block scheme, the largest system is of size 7. Furthermore, a DAE that is NQL may decompose  into subproblems  that are all QL: this is the
case e.g. with the Chemical Akzo Nobel problem,  a NQL DAE of size 6 that decomposes into 6 linear equations of size 1;
see \cite{NedialkovPryce2012b,NedialkovPryce2012a}.

In the context of computing a numerical solution using Taylor series, the block scheme has the advantage that we could exploit multiple processing units (CPUs, cores) to pipeline the computation of Taylor coefficients (TCs).  
Profiling the \daets solver \cite{nedialkov2008solving} has shown that typically more than 80\%, and in some cases above 90\%, of the total computing time is spent in  automatic differentiation (AD) for evaluating  TCs for the $f_i$'s.  Our hope is that, by assigning blocks of the DAE to different processing units and computing in a block-wise manner, we can reduce the overall time in AD to compute these coefficients.
We outline this pipelining idea here and discuss briefly some of the challenges involved.

 Using AD, to evaluate the $(k+c_i)$ TC of an $f_i$ requires $\bigo(k+c_i)$ operations, where the constant in the big-$\bigo$ notation depends on   
the number of nonlinear operations 
($\times$, $\div$, $\sin$, $\exp$, etc.) in $f_i$ \cite{griewank2008evaluating,Moor66a}.
(The work is $O(1)$ if $f_i$ contains only $+, -$, and  $d^p/dt^p$.)

Denote this constant by $\tau_i$ and write $O(k+c_i) = \tau_i(k+c_i)$. 
Using the basic scheme, at stage $k\ge 0$  we need to evaluate  $N$ such coefficients. Thus,  the work is 
\begin{align*}
\begin{split}
\sum_{i=1}^N \nopers_i (k+c_i) &= 
k\sum_{i=1}^N \nopers_i
 + \sum_{\stackrel{i=0}{c_i\neq 0}}^N\nopers_ic_i = 
	\nopers k + \sum_{\stackrel{i=0}{c_i\neq 0}}^N\nopers_ic_i 
	=O(\nopers k),
	\end{split}\label{eq:costfi}
\end{align*}
where $\nopers = \sum_{i=1}^N\nopers_i$ depends on the problem, and if the $c_i$'s are canonical, 
$\sum_{\stackrel{i=0}{c_i\neq 0}}^N\nopers_ic_i$ depends on the problem as well. To evaluate $q+1$ coefficients for stages $\rng{k}{0}{q}$, the work in AD is $\sum_{k=0}^q O(\tau k) = O(\tau q^2)$.

For our example problem, we could assign the $3\times 3$ fine block to one core, say $\text{P}2$,  and the coarse block consisting of the 3 fine blocks to another core, say P$1$. 
Then when $\text{P}2$ is at stage $k$,  $\text{P}1$ would be at stage $k-1$. When $\text{P}2$ finishes its last stage $q$, $\text{P}1$ needs to complete  stage $q$. 
If we assume that the work per each of these cores/blocks is $O(\tau q^2/2)$, that is the constant is $\tau/2$, $\text{P}2$ would finish in $O(\tau q^2/2)$ and $\text{P}1$ would finish in $O(\tau q/2)$ after $\text{P}2$. Under these  simplistic assumptions, and ignoring rest of the work (e.g. in linear algebra), we would expect the computation time to be reduced to about half. 

It is desirable to assign blocks to processing units and perform a pipelined computation, such that each stage of the pipeline takes about the same amount of time and there are no ``gaps'' in the pipeline. However, developing such a method is a challenging task on its own.
%

One difficulty is estimating the work per TC of an $f_i$. Another is, if  the number of fine blocks is much larger than the number of processing units, how to agglomerate   blocks such that each unit does nearly the  same amount of work.  A third challenge comes from the structure of the 
problem. For example,   assume that fine blocks 
3, 2,  and 1 are assigned to cores $\text{P}3$, $\text{P}2$, and $\text{P}1$, respectively. %
\begin{table}[ht]
\caption{\label{tbl:pipe}Pipelining blocks 3,2, and 1. 
$k, w:z$ means solving at stage $k$ for $w$ which depends (at least) on $z$. }
\vspace{-12pt}
{\footnotesize
\[
\begin{array}{l|c|c|c|c|c|c|c|c}
\text{core}& \multicolumn{7}{c}{\text{time slots }1,2,\ldots}\\ \hline \Tstr
\text{P}3  & 0, u'': v''& \times & \times &   1, u''':v'''&\times&\times & 2, u^{(4)}:v^{(4)} & \cdots
\\
\text{P}2 & & 0, \mu: u'' & \times &\times &1, \mu': u''' &
\times& \times&\cdots
 \\
\text{P}1 & && 0, v''': \mu & \times  & \times &1, v^{(4)}: \mu' & \times & \cdots 
\end{array}
\]
}
\end{table}
In Table~\ref{tbl:pipe}, $\text{P3}$ needs $v^{(k)}$ to find $u^{(k)}$, 
and $\text{P}1$ needs $\mu^{(k)}$, which in turn depends on $u^{(k)}$ to find $v^{(k)}$.
That is, $\text{P}3$ has to wait for $v^{(k)}$ to be determined by $\text{P}1$, which leads to the above 
 gaps, marked by $\times$. 
 
\appendix
\section{QL analysis: implementation issues}\label{ss:oov}
Algorithm \qlalgo\ (Figure \ref{alg:algor}) can be implemented using operator overloading or source code translation as follows.  

In an initialization phase, we associate with 
each input variable $x_j$   an $n$-vector with $i$th component 
\[
\varoffs{i}{x_j} = \begin{cases}
\sij{i}{j}  & \text{if } \sij{i}{j} = d_j-c_i  \text{ and $i$ and $j$ in the same block}\\
\posinf & \text{otherwise; }
\end{cases} 
\]
see Table~\ref{tbl:full}.
According to Lemma~\ref{le:propagateoffset},
for a unary operator, the offset vector remains the same; for each binary operator, we take a component-wise minimum of the associated offset vectors; and for the 
$d^p/dt^p$ operator, we can subtract $p$ from each component.
Since 
\[
\ttype_i(x_j) = \begin{cases}
\typeL  & \text{if } \varoffs{i}{x_j} = 0 \text{ and}\\
\typeI   & \text{otherwise},
\end{cases}
\]
we can assume that offset 0 encodes $\typeL$ and offset $>0$ encodes $\typeI$. Further, we can use e.g. 
``offset'' $-1$ to encode $\typeN$.

For an intermediate variable $v$, if $\ttype_i(v)=\typeN$, then the subsequent intermediate variables in the code list for the $f_i$ will be of QLity $\typeN$. Hence, for a $v$ with $\ttype_i(v) = \typeN$, we can set the $i$th component of its offset vector to  $-1$. 
Then, if the $i$th component of the offset vector of $f_i$ is 0, $\ttype_i(f_i) = \typeL$, and if it is $-1$, 
$\ttype_i(f_i) = \typeN$.

In passing, we note that, if a variable $v$ is in the code list of $f_i$ but not in the code list of $f_j$, $i\neq j$, then $\varoffs{j}{v}$ is ignored. However, we propagate the whole $n$-vector, as we do not know in advance (in particular with operator overloading) if $v$ appears in the code list of more than one $f_i$.
  
\newcommand{\oftI}[1]{#1}
\newcommand{\oftL}[1]{#1}
\newcommand{\oftN}[1]{#1}
\renewcommand{\bf}{}
\setcounter{vvc}{1}
\renewcommand{\vv}{\rv_{\thevvc}\addtocounter{vvc}{1}}
\renewcommand{\inftag}{-}

\renewcommand{\fbox}[1]{\colorbox{lightgray}{#1}}
\begin{table}[ht]\footnotesize
\caption{\label{tbl:full}Propagation of variable offsets and QLity information  through code list of \protect\rf{mod2p}. The QLity of each equation (output variable) is marked  by 
$\fbox{\protect\phantom{u}}$\,; $-$ denotes $\infty$.}
\vspace{-5pt}
\begin{align*}
\begin{array}{crl|l|rrr rrr}
         &  \multicolumn{2}{c|}{\text{code list}}&\multicolumn{1}{c|}{\text{expression}}& \multicolumn{1}{c}A & \multicolumn{1}{c}B &\multicolumn{1}{c}{C} & \multicolumn{1}{c}D & \multicolumn{1}{c}E & \multicolumn{1}{c}F 
         \\
                  &   && & 
                  \multicolumn{1}{c}{\alpha_1} & 	 \multicolumn{1}{c}{\alpha_2} &
                 \multicolumn{1}{c}{\alpha_3} & 
                 \multicolumn{1}{c}{\alpha_4} & 
                 \multicolumn{1}{c}{\alpha_5} & 
                 \multicolumn{1}{c}{\alpha_6} 
             \\ \hline\Tstr
&  \xvar& = x& x& \oftI 2  &  \oftI\inftag &  \oftL 0&  \inftag & \inftag & \inftag  
\\
&   \yvar&=y &y& \inftag& 2 &   \tagL&  \inftag & \inftag & \inftag  
\\
&  \lamvar& = \lam &\lam& \tagL& \tagL & \inftag&  \inftag & \inftag & \inftag  
\\
& \uvar &= u &u&  \inftag&  \inftag & \inftag&  \tagIM{2} & \inftag& \tagL     
\\	
& \wvar& =v & v& \inftag&  \inftag & \inftag&  \inftag&  3 &\inftag   
\\	
& \muvar& =\mu &\mu&  \inftag&  \inftag & \inftag&  \tagL & \inftag & \inftag  \\
	\hline
	\Tstr
& \vv    &= \xvar'' &x'' & \bf \tagL & \inftag& -2&  \inftag & \inftag & \inftag  \\
& \vv  &= \xvar\lamvar&x\lam & 0& \tagL & \phantom{-} 0&  \inftag & \inftag & \inftag  \\
A& =\vv &= \rv_1+\rv_2 & x''+x\lam& \bf \fbox{0}& \tagL &  -2&  \inftag & \inftag & \inftag  
\\
\hline
\Tstr
&\vv &= \yvar'' & y'' & \inftag&\bf \tagL&  {-2}&  \inftag & \inftag & \inftag  \\
&\vv &= \yvar\lamvar & y\lam & 0&\bf 0 & 0&  \inftag & \inftag & \inftag  \\
& \vv&=\rv_4+\rv_5& y''+y\lam & \tagL& \bf\tagL &  -2&  \inftag & \inftag & \inftag  \\
& \vv&=\xvar'&x'& 1& \bf \tagIM{1} & {-1}&  \inftag & \inftag & \inftag  \\
&\vv&=\rv_7^2& (x')^2&1&\bf  \tagIM{1} &  {-1}&  \inftag & \inftag & \inftag  \\
& \vv&=\rv_6+\rv_8& y''+y\lam+(x')^2&\tagL&\bf \tagL & -2&  \inftag & \inftag & \inftag  
\\
B& = \vv&=\rv_9 - G&y''+y\lam+(x')^2-G& \tagL&\bf \fbox{0} & -2&  \inftag & \inftag & \inftag  
\\
\hline
\Tstr
& \vv &= \xvar^2 &x^2&2 & \tagIM{1} &  \bf\tagNL&  \inftag & \inftag & \inftag 
\\ 
& \vv &= \yvar^2 &y^2&\inftag & 2 & \bf \tagNL&  \inftag & \inftag & \inftag\\
& \vv &= \rv_{11}+\rv_{12}&x^2+y^2 &2 & 2 & \bf \tagNL&  \inftag & \inftag & \inftag\\
         \Tstr
C &= \vv&=\rv_{13}-L^2&x^2+y^2-L^2& 2& 2 & \bf \fbox{$-1$}&  \inftag & \inftag & \inftag \Tstr \\
\hline 
\Tstr
&  \vv &= \uvar'' &u''&  \inftag &  \inftag & \inftag&\inftag &  \inftag & -2 \\
&  \vv &= \uvar\muvar & u\mu & \inftag&  \inftag & \inftag&\bf  \tagL&  \inftag  & \tagL 
\\
D &= \vv &= \rv_{15}+\rv_{16}&u''+u\mu & \inftag&  \inftag & \inftag& \bf \fbox{0}& \inftag  & -2
\\
\hline
\Tstr
&  \vv & = \wvar''' & v'''& \inftag&  \inftag & \inftag&  \inftag  & \bf \tagL& \inftag
\\
& \vv & = (\wv''')^2 & (v''')^2 & \inftag&  \inftag & \inftag&  \inftag &\bf \tagNL& \inftag  \\
& \vv & = \wvar\muvar & v\mu & \inftag&  \inftag & \inftag&  0& 3 & \inftag\\
& \vv& = \rv_{19}+\rv_{20} & (v''')^2+v\mu&\inftag&  \inftag & \inftag&  \tagL& \bf \tagNL  &\inftag\\
E & = \vv & = \rv_{21}-G &(v''')^2+v\mu-G& \inftag&  \inftag & \inftag&  \tagL&\bf \fbox{$-1$}  &\inftag\\
\hline
\Tstr
& \vv & = \uvar^2 & u^2&\inftag&  \inftag & \inftag&  \inftag& \inftag & \bf \tagNL\\
& \vv & = \wvar^2 & v^2&\inftag&  \inftag & \inftag&  \inftag& 3 &\bf \inftag \\
& \vv & = \rv_{23}+\rv_{24} & u^2+v^2&
	 \inftag&  \inftag & \inftag&  \inftag& 3 & \bf \tagNL\\
& \vv & = c\lamvar & c\lam&
	 \tagL&  \tagL & \inftag& \inftag& \inftag& \inftag
	 \\
&\vv & = L+\rv_{26} &L+ c\lam&
	 \tagL&  \tagL & \inftag& \inftag& \inftag& \inftag
	 \\	
& \vv & = \rv_{27}^2 &(L+ c\lam)^2& 
	 \tagNL&  \tagNL & \inftag& \inftag& \inftag& \inftag
	 \\
& \vv & = \rv_{25} -\rv_{28}& u^2+v^2-(L+ c\lam)^2&
	\tagNL& \tagNL & \inftag&\inftag& 3&\bf \tagNL
	  \\	 	
& \vv & = \lamvar & \lam''&
	 -2& -2& \inftag& \inftag& \inftag& \inftag
	 \\
F &= \vv & = \rv_{29}+\rv_{30} & u^2+v^2-(L+ c\lam)^2+\lam''&
	-2&  -2 & \inftag&\inftag& 3&\bf \fbox{$-1$} 
\\ \hline
 \end{array}
\end{align*}

\end{table}

\bibliographystyle{siam}

\end{document}